\newtheorem{thm}{Theorem}[section]
\newtheorem{Def}[thm]{Definition}
\newtheorem{exam}[thm]{Example}
\newtheorem{lemma}[thm]{Lemma}
\newtheorem{rmk}[thm]{Remark}
\newtheorem{rmks}[thm]{Remarks}
\begin{document}
\makeRR   

\section{Introduction}
Since the notion of $\epsilon$-efficiency for multi-objective optimization problems
(MOPs) has been introduced more than two decades ago (\cite{loridan:84}), this concept 
has been studied and used by many researchers, e.g. to allow (or tolerate) nearly optimal
solutions (\cite{loridan:84}, \cite{white:86}), to approximate the set of optimal solutions 
(\cite{RF1990a}), or in order to discretize this set (\cite{LTDZ2002b}, 
\cite{slcdt:07}). $\epsilon$-efficient solutions or approximate solutions have 
also been used to tackle a variety of real world problems including portfolio selection 
problems (\cite{white:98}), a location problem (\cite{blanquero:02}), or a minimal cost flow 
problem (\cite{RF1990a}). \\
As an illustrative example where it could make sense from the practical point of
view to consider in addition to the exact solutions also approximate ones we consider a 
plane truss design problem, where the volume of the truss as well as the displacement of 
the joint to a given position have to be minimized (see also Section 6.2). Since the designs 
of this problem -- as basically in all other engineering problems -- have to obey certain 
physical contraints such as in this case the weight and  stability of the structural element,
the objective values of all feasible solutions are located within a relatively tight and a
priori appreciable range. 
Hence, the maximal tolerable loss of a design compared to an `optimal' one with respect to 
the objective values can easily be determined quantitatively and qualitatively by the
decision maker (DM) before the optimization process.  
The resulting set of exact {\em and} approximate (but physically relevant) solutions obtained 
by the optimization algorithm\footnote{Here we assume an idealized algorithm, since in 
practise every solution is an approximate one.} 
leads in general to a larger variety of possibilities to the DM than `just' the set of exact
solutions: this is due to the fact that solutions which are `near' in objective space can 
differ significantly in design space (e.g., when the model contains symmetries, or see 
Section 6.3 for another example). \\
The computation of such approximate solutions has been addressed in several studies. In most 
of them, scalarization methods have been empoyed (e.g., \cite{white:86}, \cite{blanquero:02},
\cite{engau:07}). By their nature, such algorithms can deliver only single solutions by one
single execution. The only work so far which deals with the approximation of the entire set of 
approximate solutions (denote by $E_{\epsilon}$) is \cite{SCT_epseff:07}, where an archiving 
strategy for stochastic search algorithms is proposed for this task. Such a sequence of 
archives obtained by this algorithm provably converges -- under mild assumptions on the 
process to generate new candidate solutions -- to $E_{\epsilon}$ in the limit and in the
probabilistic sense. This result, 
though satisfactory for most discrete MOPs, is at least from the practical viewpoint not 
sufficient for continuous models (i.e., continous objectives defined on a continuous  domain): 
in this case, the set of approximate solutions typically forms an $n$-dimensional object, 
where $n$ denotes the dimension of the parameter space (see below).
Thus, it may come to performance problems since it can easily happen that a given threshold
on the magnitude of the archives is exceeded before a `sufficient' approximation of the
set of interest in terms of diversity and/or convergence is obtained. An analogue statement 
holds for the approximation of the Pareto front, which is `only' $(k-1)$-dimensional for MOPs 
with $k$ objectives, and suitable discretizations have been subject of research since several
years (e.g., \cite{LTDZ2002b}, \cite{knowles_2004_bounded}, \cite{slcdt:07}). \\
The scope of this paper is to develop a framework for finite size representations of 
the set $E_{\epsilon}$ with stochastic search algorithms such as evolutionary multi-objective 
(EMO) algorithms. This will call for the design of a novel archiving strategy to store the
`required' solutions found by the stochastic search process. We will further analyze the
convergence behavior of this method, yielding bounds on the approximation quality as well
as on the cardinality of the resulting approximations. Finally, we 
will demonstrate the practicability of the novel approach by several examples. \\

The remainder of this paper is organized as follows: in Section 2, we state the
required background including the set of interest $P_{Q,\epsilon}$. In Section 3, we propose 
a novel archiving strategy for the approximation of $P_{Q,\epsilon}$ and state a convergence 
result, and give further on an upper bound on the resulting archive sizes in Section 4. In 
Section 5, we present some numerical results, and make finally some conclusions in Section 6.

\section{Background}
In the following we consider continuous multi-objective optimization problems 
{\renewcommand{\theequation}{MOP}
\begin{equation} 
\label{eq:MOP}
\min_{x\in Q}\{F(x)\},
\end{equation}
\addtocounter{equation}{-1}
}
where $Q\subset \mathbbm{R}^n$ and $F$ is defined as the vector of the objective 
functions $ F:Q\to\mathbbm{R}^k,\quad F(x) = (f_1(x),\ldots,f_k(x)),$
and where each $f_i: Q\to\mathbbm{R}$ is continuously differentiable. 
Later we will restrict the search to a compact set $Q$, the reader may think of 
an $n$-dimensional box.

\begin{Def}
\begin{enumerate}
\item[(a)]
\label{kleiner}
Let $v,w\in Q$. Then the vector $v$ is {\em less than} $w$ ($v<_pw$),
if $v_i< w_i$ for all $i\in\{1,\ldots,k\}$.  The relation $\leq_p$ is defined analogously.
\item[(b)]
$y\in\mathbbm{R}^n$ is {\em dominated} by a point $x\in Q$ ($x\prec y$) 
with respect to (\ref{eq:MOP}) if $F(x)\leq_p F(y)$ and $F(x)\neq F(y)$, 
else $y$ is called nondominated by $x$.
\item[(c)] $x\in Q$ is called a {\em Pareto point} if there is no 
$y\in Q$ which dominates $x$.
\end{enumerate}
\end{Def}

The set of all Pareto optimal solutions is called the {\em Pareto set} (denote by
$P_Q$). This set typically -- i.e., under mild regularity assumptions -- forms a
$(k-1)$-dimensional object. The image of the Pareto set is called the {\em Pareto front}.\\
We now define another notion of dominance, which is the basis of the approximation concept 
used in this study.
\begin{Def}
Let $\epsilon = (\epsilon_1,\ldots,\epsilon_k)\in\mathbbm{R}^k_+$ and $x,y\in Q$.
\begin{enumerate}
\item[(a)]
  $x$ is said to $\epsilon$-dominate $y$ ($x\prec_\epsilon y$) with respect to (\ref{eq:MOP}) 
  if $F(x)-\epsilon \leq_p F(y)$ and $F(x)-\epsilon \neq F(y)$.
\item[(b)]
  $x$ is said to $-\epsilon$-dominate $y$ ($x\prec_{-\epsilon} y$) with respect to 
  (\ref{eq:MOP}) 
  if $F(x)+\epsilon \leq_p F(y)$ and $F(x)+\epsilon \neq F(y)$.
\end{enumerate}
\end{Def}
The notion of $-\epsilon$-dominance is of course analogous to the `classical'
$\epsilon$-dominance relation but with a value $\tilde{\epsilon}\in\mathbbm{R}^k_-$. However, 
we highlight it here since it will be used frequently in this work. While the
$\epsilon$-dominance is a weaker concept of dominance, $-\epsilon$-dominance is a stronger 
one.\\
We now define the set which we want to approximate in the sequel.
\begin{Def}
Denote by $P_{Q,\epsilon}$ the set of points in $Q\subset \mathbbm{R}^n$  which are not 
$-\epsilon$-dominated by any other point in $Q$, i.e.
\begin{equation}
  P_{Q,\epsilon} := \{ x\in Q | \not\exists y\in Q: y\prec_{-\epsilon}  x
                    \}.
\end{equation}
\end{Def}
To see that $P_{Q,\epsilon}$ typically forms an $n$-dimensional set let 
$x_0\in P_Q$ (such a point, for instance, always exists if $Q$ is compact). 
That is, there exists no $y\in Q$ such that $y\prec x_0$. Since $F$ is continuous 
and $\epsilon\in\mathbbm{R}^k_+$ there exists a neigborhood $U$ of $x_0$ such that
\begin{equation}
 \not\exists y\in Q:\; y\prec_{-\epsilon} u \quad \forall u\in U\cap Q,
\end{equation} 
and thus, $U\cap Q\subset P_{Q,\epsilon}$, and we are done since $U$ is
$n$-dimensional.\\
The following result and notions are used for the upcoming proof of convergence.

\begin{thm}[\cite{SSW:02}]
\label{thm:schaeffler}
Let (MOP) be given and $q:\mathbbm{R}^n\to\mathbbm{R}^n$ be defined by 
$ q(x) = \sum_{i=1}^k\hat{\alpha}_i\nabla f_i(x)$, where $\hat{\alpha}$ is a solution of 
\begin{equation*} \label{eq:qop}
\min_{\alpha\in\mathbbm{R}^k}
  \left\{\left\|\sum_{i=1}^k\alpha_i\nabla f_i(x)\right\|_2^2; \alpha_i\geq0,i=1,\ldots,k,
   \sum_{i=1}^k\alpha_i = 1\right\}.
\end{equation*}
Then either $q(x)=0$ or $-q(x)$ is a descent direction for all objective 
functions $f_1,\ldots,f_k$ in $x$. Hence, each $x$ with $q(x)=0$ fulfills the
first-order necessary condition for Pareto optimality.
\end{thm}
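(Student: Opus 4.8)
The plan is to read the quadratic program as the computation of the element of smallest norm in the convex hull of the gradients, and then to exploit the first-order optimality condition of this convex subproblem. Write $g_i := \nabla f_i(x)$ and let $\Delta := \{\alpha\in\mathbbm{R}^k : \alpha_i\geq 0,\ \sum_{i=1}^k\alpha_i=1\}$ be the standard simplex. The objective
$$\phi(\alpha) := \left\|\sum_{i=1}^k\alpha_i g_i\right\|_2^2 = \sum_{i,j}\alpha_i\alpha_j\langle g_i,g_j\rangle$$
is a convex quadratic, and $\Delta$ is compact and convex; hence a minimizer $\hat\alpha$ exists and $q(x)=\sum_i\hat\alpha_i g_i$ is well defined (geometrically, it is the point of $\mathrm{conv}\{g_1,\ldots,g_k\}$ closest to the origin).

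First I would record the gradient of $\phi$: a direct computation gives $\partial\phi/\partial\alpha_l(\alpha) = 2\langle g_l,\sum_j\alpha_j g_j\rangle$, so that $\nabla\phi(\hat\alpha) = 2(\langle g_1,q(x)\rangle,\ldots,\langle g_k,q(x)\rangle)^T$. The key step is then the first-order optimality condition for minimizing a differentiable function over a convex set: at the minimizer $\hat\alpha$ one has $\langle\nabla\phi(\hat\alpha),d\rangle\geq 0$ for every feasible direction $d$. Taking $d=e_j-\hat\alpha$, the direction pointing from $\hat\alpha$ to the $j$-th vertex $e_j$ of $\Delta$ (feasible since $(1-t)\hat\alpha+te_j\in\Delta$ for small $t>0$ by convexity), and using $\sum_l\hat\alpha_l\langle g_l,q(x)\rangle = \langle q(x),q(x)\rangle = \|q(x)\|_2^2$, this yields after cancelling the common factor
\begin{equation*}
  \langle\nabla f_j(x),q(x)\rangle - \|q(x)\|_2^2 \geq 0 \qquad\text{for all } j=1,\ldots,k.
\end{equation*}

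With this inequality the theorem follows quickly. If $q(x)\neq 0$, then $\|q(x)\|_2^2>0$ and hence $\langle\nabla f_j(x),-q(x)\rangle\leq -\|q(x)\|_2^2<0$ for every $j$, which is precisely the statement that $-q(x)$ is a common descent direction for $f_1,\ldots,f_k$ at $x$. Finally, if $x$ is Pareto optimal, no common descent direction can exist (a small step along such a direction would strictly decrease every objective and thus produce a point dominating $x$), so the first alternative is excluded and $q(x)=0$; equivalently $0=\sum_i\hat\alpha_i\nabla f_i(x)$ with $\hat\alpha\in\Delta$, which is exactly the asserted first-order (Fritz John / KKT-type) necessary condition.

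I expect the main obstacle to be the careful justification of the optimality step and the choice of feasible directions $e_j-\hat\alpha$: one must verify that testing against the vertices suffices to extract the inequality for every index $j$ simultaneously, and that the argument never tacitly assumes $\hat\alpha$ lies in the relative interior of $\Delta$ (typically it does not, since several weights may vanish). Everything else — existence of the minimizer, the gradient computation, and the concluding descent/Pareto argument — is routine.
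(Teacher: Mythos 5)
The paper contains no proof of this statement for you to be compared against: Theorem~\ref{thm:schaeffler} is imported verbatim from \cite{SSW:02} as a background result, and the paper only uses it to define the map $q$ appearing in assumption (A3) of its convergence theorem. Judged on its own merits, your proposal is correct, and it is the standard argument for this classical result: $q(x)$ is the minimum-norm element of $\mathrm{conv}\{\nabla f_1(x),\ldots,\nabla f_k(x)\}$; the variational inequality $\langle\nabla\phi(\hat\alpha),e_j-\hat\alpha\rangle\geq 0$, which holds at any minimizer over a convex set regardless of how many components of $\hat\alpha$ vanish, combined with $\sum_l\hat\alpha_l\langle\nabla f_l(x),q(x)\rangle=\|q(x)\|_2^2$, yields $\langle\nabla f_j(x),q(x)\rangle\geq\|q(x)\|_2^2$ for every $j$, and both alternatives of the dichotomy follow at once. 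Your closing worry is unfounded: you never need the vertex directions to ``suffice'' to characterize optimality; you only invoke those $k$ particular instances of the variational inequality, each of which holds unconditionally, whether or not $\hat\alpha$ lies in the relative interior of the simplex.

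Two minor remarks. First, the theorem's final sentence requires no argument involving Pareto optimal points at all: $q(x)=0$ says precisely that $0=\sum_i\hat\alpha_i\nabla f_i(x)$ with $\hat\alpha_i\geq 0$ and $\sum_i\hat\alpha_i=1$, which \emph{is} the first-order (Fritz John) condition, so that implication is definitional. What you prove in addition --- that Pareto optimality of $x$ forces $q(x)=0$ --- is what makes the phrase ``necessary condition'' meaningful, but note that your small-step argument tacitly assumes the step stays feasible; it is an unconstrained (or interior-point) argument, consistent with the unconstrained setting of \cite{SSW:02}, even though the paper's (MOP) is posed over a set $Q$. Second, if you want $q$ to be a well-defined map you should note that the nearest point of a closed convex set to the origin is unique even when the minimizing weight vector $\hat\alpha$ is not; your geometric parenthesis contains this implicitly, but it deserves a sentence.
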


\begin{Def}
Let $u\in\mathbbm{R}^n$ and $A,B\subset \mathbbm{R}^n$. The semi-distance
$\mbox{dist}(\cdot,\cdot)$ and the {\em Hausdorff distance} $d_H(\cdot,\cdot)$ are
defined as follows:
\begin{itemize}
  \item[(a)] $\mbox{dist}(u,A) := \inf\limits_{v\in A} \|u-v\|$
  \item[(b)] $\mbox{dist}(B,A) := \sup\limits_{u\in B}\,\mbox{dist}(u,A)$
  \item[(c)] $ d_H(A,B) := \max\left\{ \mbox{dist}(A,B), \mbox{dist}(B,A) \right\} $
\end{itemize}
\end{Def}

Denote by $\overline{A}$ the closure of a set $A\in\mathbbm{R}^n$, by 
$\overset{\circ}{A}$ its interior, and by $\partial A = \overline{A}\backslash 
\overset{\circ}{A}$ the boundary of $A$. 

\begin{Def}
\begin{enumerate}
\item[(a)]
A point $x\in Q$ is called a {\em weak Pareto point} if there exists
no point $y\in Q$ such that $F(y)<_p F(x)$.
\item[(b)]
A point $x\in Q$ is called {\em $-\epsilon$ weak Pareto point} if there exists
no point $y\in Q$ such that $F(y)+\epsilon <_p F(x)$.
\end{enumerate}
\end{Def}

Algorithm \ref{alg:generic_emo} gives a framework of a generic stochastic
multi-objective optimization algorithm, which will be considered in this work. Here, 
$Q\subset\mathbbm{R}^n$ denotes the domain of the MOP, $P_j$ the candidate set 
(or population) of the generation process at iteration step $j$, and $A_j$ the 
corresponding archive.
\begin{algorithm}
\caption {Generic Stochastic Search Algorithm}
\label{alg:generic_emo}
\begin{algorithmic}[1]
\State $P_0\subset Q$ drawn at random
\State $A_0 = ArchiveUpdate(P_0,\emptyset)$
\For{$j=0,1,2, \ldots$}
  \State $P_{j+1} = Generate (P_j)$
  \State $A_{j+1} = ArchiveUpdate (P_{j+1}, A_j)$
\EndFor
\end{algorithmic}
\end{algorithm}

\section{The Algorithm}
Here we present and analyze a novel archiving strategy which aims for a finite 
size representation of $P_{Q,\epsilon}$. \\
The algorithm which we propose here, $ArchiveUpdateP_{Q,\epsilon}$, is given in
Algorithm \ref{alg:PQe2}. Denote by $1\Delta := (\Delta,\ldots,\Delta)\in \mathbbm{R}_+^k$, 
where $\Delta\in\mathbbm{R}_+$ can be viewed as the discretization parameter of the
algorithm.

\begin{algorithm}
\caption {$A := ArchiveUpdateP_{Q,\epsilon}\; (P, A_0, \Delta)$}
\label{alg:PQe2}
\begin{algorithmic}[1]
\Require{population $P$, archive $A_0$, $\Delta\in\mathbbm{R}_+$, $\Delta^*\in (0,\Delta)$}
\Ensure{updated archive $A$}
\State $A := A_0$
\ForAll{$p\in P$}
  \If{$\not\exists a_1\in A:\; a \prec_{-\epsilon} p \;\mbox{and}\; 
       \not\exists a_2\in A:\;d_\infty(F(a),F(p))\leq \Delta^*$} 
  \State $A:=A\cup \{p\}$
  \ForAll{$a\in A$} 
    \If{$p \prec_{-(\epsilon+1\Delta)} a$}
       \State $A:=A\backslash\{a\}$
    \EndIf
  \EndFor
 \EndIf
\EndFor 
\end{algorithmic}
\end{algorithm}

\begin{lemma}
\label{lemma:au1}
Let $A_0, P\subset \mathbbm{R}^n$ be finite sets, $\epsilon\in\mathbbm{R}_+^k$, 
$\Delta\in\mathbbm{R}_+$, and \\
$A:= \mathit{ArchiveUpdateEps1} \; (P, A_0)$. Then the following holds:
\[ \forall x\in P\cup A_0:\quad \exists a\in A: a \prec_{1\Delta} x.
\]
\end{lemma}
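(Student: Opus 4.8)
The plan is to run an induction over the main loop of the archiver and to carry along the invariant that \emph{every point processed so far is $1\Delta$-dominated by some current archive member}. Everything hinges on one elementary remark: because $\Delta>0$, the relation $\prec_{1\Delta}$ is reflexive. Indeed $F(a)-1\Delta<_p F(a)$ for every $a$, so $F(a)-1\Delta\leq_p F(a)$ and $F(a)-1\Delta\neq F(a)$, i.e.\ $a\prec_{1\Delta}a$. Consequently any $x$ that survives into the final archive $A$ is its own witness, and the real content of the lemma concerns the points of $P\cup A_0$ that are never inserted, or inserted and later deleted.

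First I would index the execution: set $A^{(0)}:=A_0$, let $A^{(t)}$ be the archive after the $t$-th member of $P$ has been treated, and let $P_{\leq t}$ be the first $t$ members of $P$. The goal is the invariant
\[
I(t):\qquad \forall x\in A_0\cup P_{\leq t}\ \ \exists\, a\in A^{(t)}:\ a\prec_{1\Delta}x ,
\]
from which the lemma follows by reading off $I(|P|)$, since $A^{(|P|)}=A$. The base case $I(0)$ is immediate from reflexivity. In the inductive step, let $p$ be the point currently processed. If $p$ is rejected, the insertion test supplies some $a\in A^{(t)}$ with $a\prec_{1\Delta}p$ while the archive is unchanged, so $I(t+1)$ follows from $I(t)$ together with this witness for $p$. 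If $p$ is inserted, then $p\in A^{(t+1)}$ (strict dominance is irreflexive, so $p$ cannot delete itself), which gives a witness for $x=p$ by reflexivity; for every older $x$ whose $I(t)$-witness $a$ is not deleted at this step, the same $a$ still works.

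The one remaining case --- and the step I expect to be the main obstacle --- is when the $I(t)$-witness $a$ of some older point $x$ is precisely an element deleted by the freshly inserted $p$. Here I would use that deletion in $\mathit{ArchiveUpdateEps1}$ is triggered by \emph{strict} dominance $p\prec a$, i.e.\ with no $\epsilon$-slack, and prove the mixed-transitivity claim
\[
p\prec a\ \text{ and }\ a\prec_{1\Delta}x\ \Longrightarrow\ p\prec_{1\Delta}x .
\]
From $F(p)\leq_p F(a)$ we get $F(p)-1\Delta\leq_p F(a)-1\Delta\leq_p F(x)$, and a squeeze argument rules out $F(p)-1\Delta=F(x)$: equality would force $F(p)=F(a)$, contradicting $F(p)\neq F(a)$. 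Hence $p\prec_{1\Delta}x$ with the surviving witness $p\in A^{(t+1)}$, closing $I(t+1)$.

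The essential point in this last step is that the $1\Delta$ budget is spent entirely on the $a\prec_{1\Delta}x$ link and is \emph{not} replenished by the strict link $p\prec a$; were deletions instead governed by $\prec_{1\Delta}$, the slack would accumulate along chains of replacements to $2\Delta,3\Delta,\dots$ and the stated bound $\prec_{1\Delta}$ would be lost. This is why I would pin down the exact insertion and deletion rules of $\mathit{ArchiveUpdateEps1}$ before anything else, since the whole argument stands or falls with deletion being by strict (rather than $\epsilon$-relaxed) dominance.
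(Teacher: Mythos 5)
Your proof is correct, and there is essentially nothing in the paper to compare it against: the paper's entire proof of this lemma is the sentence ``This follows immediately by the construction of the algorithm,'' so your induction with the invariant $I(t)$, reflexivity of $\prec_{1\Delta}$ for surviving points, and witness transfer by transitivity for deleted points is exactly the elaboration the paper omits. The two structural assumptions you flag as the crux both hold for the actual archiver, which despite the stale name $\mathit{ArchiveUpdateEps1}$ is Algorithm \ref{alg:PQe2}. First, a point $p$ is rejected there only if either some archive member $a_1$ satisfies $a_1\prec_{-\epsilon}p$, i.e.\ $F(a_1)+\epsilon\leq_p F(p)$, or some $a_2$ satisfies $d_\infty(F(a_2),F(p))\leq\Delta^*$; since $\epsilon\in\mathbbm{R}^k_+$ and $\Delta^*<\Delta$ (the Require clause of the algorithm --- this is where the strict inequality is genuinely needed), both cases give $F(a_i)-1\Delta<_p F(p)$ in every coordinate, i.e.\ the $\prec_{1\Delta}$-witness your rejection branch needs. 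Second, deletion is triggered not by bare strict dominance but by $p\prec_{-(\epsilon+1\Delta)}a$, i.e.\ $F(p)+\epsilon+1\Delta\leq_p F(a)$; this implies $p\prec a$, so your mixed-transitivity step applies \emph{a fortiori}, and in fact with room to spare: combined with $F(a)-1\Delta\leq_p F(x)$ it yields $F(p)-1\Delta<_p F(p)+\epsilon\leq_p F(a)-1\Delta\leq_p F(x)$, strict in every coordinate, so the $\Delta$-slack contracts rather than accumulates along replacement chains --- precisely ruling out the failure mode you were worried about. (Irreflexivity also survives: $p\prec_{-(\epsilon+1\Delta)}p$ is impossible, so an inserted point never deletes itself.) Hence your argument goes through verbatim for the algorithm as actually defined, and it supplies the content that the paper's one-line proof leaves to the reader.
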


\begin{proof} This follows immediately by the construction of the algorithm.
\end{proof}

\begin{thm}
\label{thm:convergence}
Let an MOP $F:\mathbbm{R}^n\to\mathbbm{R}^k$ be given, where $F$ is continuous, let
$Q\subset\mathbbm{R}^n$ be a compact set and $\epsilon\in\mathbbm{R}^k_+$, 
$\Delta,\Delta^*\in\mathbbm{R}_+$ with $\Delta^*<\Delta$. For the generation process
we assume
\begin{equation}
\begin{split}
  \label{eq:P=1} 
  \forall x\in Q \;\mbox{and}\; \forall \delta > 0:
  \quad
  P\left(\exists l\in\mathbbm{N}\; : \; P_l\cap B_\delta(x)\cap Q\neq 
   \emptyset\right) = 1
\end{split}
\end{equation}
and for the MOP 
\begin{itemize}
\item[(A1)] Let there be no weak Pareto point in $Q\backslash P_Q$.
\item[(A2)] Let there be no $-\epsilon$ weak Pareto point in 
            $Q\backslash\overline{P_{Q,\epsilon}}$,
\item[(A3)] Define ${\cal B} := \{x\in Q | \exists y\in P_Q: F(y)+\epsilon = F(x)\}$.
            Let ${\cal B} \subset \overset{\circ}{Q}$ and $q(x)\neq 0$ for all $x\in {\cal B}$, 
            where $q$ is as defined in Theorem \ref{thm:schaeffler}.
\end{itemize}

Then, an application of Algorithm \ref{alg:generic_emo}, where \\
$AchiveUpdateP_{Q,\epsilon}(P,A,\Delta)$ is used to update the archive, leads to a sequence 
of archives  $A_l, l\in\mathbbm{N}$, where the following holds:
\begin{itemize}
\item[(a)] For all $l\in\mathbbm{N}$ it holds
 \begin{equation}
  \|F(a_1)-F(a_2) \|_\infty \geq \Delta^*
 \end{equation}

\item[(b)] There exists with probability one an $l_0\in\mathbbm{N}$ such that
  for all $l\ge l_0$:
\begin{itemize}
  \item[(b1)] $dist (F(P_{Q,\epsilon}),F(A_l)) < \Delta$
  \item[(b2)] $dist (F(A_l),F(P_{Q,\epsilon})) \leq 
              dist(F(P_{Q,\epsilon+2\Delta}),F(P_{Q,\epsilon}))$
  \item[(b3)] $d_H (F(P_{Q,\epsilon}),F(A_l)) \leq D$, where\\
              $D=max(\Delta, dist(F(P_{Q,\epsilon+2\Delta}),F(P_{Q,\epsilon}))$
\end{itemize}
\end{itemize}
\end{thm}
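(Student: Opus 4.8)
The plan is to reduce everything to one robustness mechanism built into the deletion threshold, apply it pointwise, and then globalise by compactness. Part (a) is an invariant of $ArchiveUpdateP_{Q,\epsilon}$: a point enters only when its image is farther than $\Delta^*$ in $d_\infty$ from every current archive image, and deletions never shrink pairwise image distances, so a trivial induction on the update steps gives $\|F(a_1)-F(a_2)\|_\infty\geq\Delta^*$ for every $l$ and every pair. The engine for (b) is the observation that the deletion test $\prec_{-(\epsilon+1\Delta)}$ is calibrated to $\Delta$: if $a\in A$ satisfies $\|F(a)-F(x^*)\|_\infty<\Delta$ for some $x^*\in P_{Q,\epsilon}$, then $a$ can never be deleted, since a deleting point $b$ with $F(b)+\epsilon+1\Delta\leq_p F(a)$ would give $F(b)+\epsilon<_pF(x^*)$, i.e. $b\prec_{-\epsilon}x^*$, contradicting $x^*\in P_{Q,\epsilon}$. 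I will call such an $a$ a persistent $\Delta$-representative of $x^*$.

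For the pointwise version of (b1) I fix $x^*\in P_{Q,\epsilon}$ and aim to produce a point $\hat x\in Q$ with $F(\hat x)<_pF(x^*)$ and $\|F(\hat x)-F(x^*)\|_\infty$ arbitrarily small. Any such $\hat x$ is automatically non-$(-\epsilon)$-dominated (a dominator of $\hat x$ would $-\epsilon$-dominate $x^*$), so by the density hypothesis (\ref{eq:P=1}) a generated point $p$ near $\hat x$ still satisfies $F(p)<_pF(x^*)$, whence clause (i) of the insertion test cannot fire and the only possible rejection is the $\Delta^*$-proximity clause (ii); in either case a persistent $\Delta$-representative of $x^*$ sits in the archive from that step on, giving $\mbox{dist}(F(x^*),F(A_l))<\Delta$. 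The existence of $\hat x$ splits in two. If $x^*\notin\mathcal{B}$, a compactness argument shows $x^*$ is not a limit of $-\epsilon$-dominated points: a convergent sequence of dominators $y_m\to y^*$ would force $F(y^*)+\epsilon=F(x^*)$ with $y^*\in P_Q$, i.e. $x^*\in\mathcal{B}$. Hence a neighbourhood of $x^*$ is free of $-\epsilon$-dominated points and I may take $\hat x=x^*$. If $x^*\in\mathcal{B}$, then by (A3) $x^*\in\overset{\circ}{Q}$ and $q(x^*)\neq0$, so Theorem \ref{thm:schaeffler} makes $-q(x^*)$ a common descent direction for $f_1,\ldots,f_k$ and a short step along it produces the required $\hat x$.

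To upgrade this to the stated supremum I cover the compact set $F(\overline{P_{Q,\epsilon}})$ by finitely many small balls, treat one representative per ball to slightly sharper accuracy, and take $l_0$ as the maximum of the finitely many probability-one hitting times, whose intersection again has probability one. For (b2) I use a second invariant: the archive never holds $a,b$ with $b\prec_{-(\epsilon+1\Delta)}a$, because an incoming $p$ deletes everything it $-(\epsilon+1\Delta)$-dominates and is itself admitted only when nothing in $A$ even $-\epsilon$-dominates it. Given $a\in A_l$ with $l\geq l_0$ and $a\notin P_{Q,\epsilon+2\Delta}$, some $y\in Q$ has $F(y)+\epsilon+2\Delta\leq_p F(a)$; the compact set $\{z\in Q:F(z)\leq_pF(y)\}$ contains a Pareto point $\tilde y$ with $F(\tilde y)\leq_pF(y)$, and by (b1) its persistent $\Delta$-representative $b$ already lies in $A_l$. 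A one-line estimate then yields $b\prec_{-(\epsilon+1\Delta)}a$, contradicting the invariant; hence $A_l\subseteq P_{Q,\epsilon+2\Delta}$, which is exactly (b2) via the definition of the semi-distance, and (b3) follows by combining (b1) and (b2) in the definition of $d_H$.

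The hard part will be the boundary case $x^*\in\mathcal{B}$: there, points generated arbitrarily close to $x^*$ are genuinely $-\epsilon$-dominated and therefore rejected, so no representative of $x^*$ can be obtained by aiming at $x^*$ directly. The way out is precisely the descent detour granted by (A3) and Theorem \ref{thm:schaeffler}, which lets me aim slightly below $x^*$ into the non-dominated region while keeping the objective displacement under $\Delta$; here (A1) and (A2) serve as the regularity hypotheses that keep this weak-Pareto boundary well behaved and the dichotomy exhaustive. A secondary difficulty is the passage from pointwise convergence to a single $l_0$ valid simultaneously on all of $P_{Q,\epsilon}$, for which the compactness of $Q$ and the probability-one density of the generator are indispensable.
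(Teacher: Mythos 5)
Your proposal is correct, and parts (a), (b3) and the persistence mechanism coincide with the paper's; but your treatment of the central claim (b1) follows a genuinely different route. The paper proves existence of $l_0$ by contradiction in two stages: it first shows that only finitely many replacements (deletions via the $\prec_{-(\epsilon+1\Delta)}$ rule) can ever occur, combining the uniform bound $n_0$ on the archive size with a ``history'' argument inside the bounded set $F(Q)$; it then supposes no $l_0$ exists, extracts an accumulation point of points of $P_{Q,\epsilon}$ left uncovered by the archives, and invokes the result quoted from \cite{SCT_epseff:07} that (A1)--(A3) imply $\overline{\overset{\circ}{P_{Q,\epsilon}}}=\overline{P_{Q,\epsilon}}$, so that the generator almost surely places points of $\overset{\circ}{P_{Q,\epsilon}}$ arbitrarily close to that accumulation point and forces infinitely many replacements --- a contradiction. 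You argue constructively instead: your ``persistent $\Delta$-representative'' lemma is exactly the mechanism the paper uses only for the easier step (elements of $\tilde A\subset P_{Q,\epsilon+1\Delta}$ are never discarded), but you apply it pointwise through the dichotomy $x^*\notin\mathcal{B}$ (where compactness of $Q$ yields a whole neighbourhood of $x^*$ free of $-\epsilon$-dominated points, since a convergent sequence of dominators would exhibit a Pareto point $y^*$ with $F(y^*)+\epsilon=F(x^*)$) versus $x^*\in\mathcal{B}$ (where (A3) and Theorem \ref{thm:schaeffler} supply the descent detour $\hat x$), and you then globalise by covering the compact set $F(\overline{P_{Q,\epsilon}})$ with finitely many balls centred at images of points of $P_{Q,\epsilon}$ and taking $l_0$ as the maximum of finitely many almost-surely finite hitting times. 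This buys self-containedness and sharper bookkeeping: you use (A3) directly, never need (A1)--(A2) or the cited interior-density fact, obtain a uniform quantitative margin below $\Delta$, and avoid the infinite-replacement contradiction entirely; what you give up is the finite-replacement statement itself, which in the paper is of some independent interest. For (b2) both proofs hinge on the same estimate (a representative within $\Delta$ of a point of $P_{Q,\epsilon}$ must $-(\epsilon+1\Delta)$-dominate any archive member outside $P_{Q,\epsilon+2\Delta}$), but your formulation via the invariant that the archive never contains a pair $a,b$ with $b\prec_{-(\epsilon+1\Delta)}a$ is cleaner than the paper's temporal reasoning, and you also make explicit a step the paper glosses over, namely that the $-(\epsilon+2\Delta)$-dominating point $y$ can be replaced by a Pareto point $\tilde y$ with $F(\tilde y)\leq_p F(y)$, obtained for instance by minimising $\sum_i f_i$ over the compact section $\{z\in Q: F(z)\leq_p F(y)\}$.
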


\begin{proof}
Before we state the proof we have to make some remarks: a point $p$ is discarded from an 
existing archive $A$ in two cases (see line 3 of Algorithm \ref{alg:PQe2}):
\begin{equation}
\label{eq:d1d2}
\begin{split}
  (D1) & \quad \exists a_1\in A:\; a_1\prec_{-\epsilon} p, \quad \mbox{or} \\
  (D2) & \quad \exists a_2\in A:\; \|F(a_2)-F(p)\|_\infty\leq \Delta^*.
\end{split}
\end{equation}

Further, we define by
\[ B^\infty_\delta(x) := \{y\in\mathbbm{R}^k\, :\, \|x-y\|_\infty<\delta\}
\]
a $k$-dimensional open box around $x\in\mathbbm{R}^k$. Now we are in the position to state 
the proof.

{\it Claim (a)}: follows immediately by construction of the algorithm and by an inductive
argument. \\
{\it Claim (b1)}: By \it{(a)} it follows that for an element $a$ from a given archive $A$ 
it holds
\begin{equation}
\label{eq:exclusion}
F(\tilde{a})\not\in B^\infty_{\Delta^*}(F(a)),\quad \forall \tilde{a}\in A\backslash \{a\},
\end{equation}
Since further $Q$ is compact and $F$ is continuous it follows that $F(Q)$ is bounded,
and thus, there exits an upper bound for the number of entries in the archive for a 
given MOP, denote by $n_0 = n_0 (\Delta^*, F(Q))$ (see also Section 4). \\
Since $\overline{P_{Q,\epsilon}}$ is compact and \\
$dist(F(P_{Q,\epsilon}),F(A_l)) = dist(F(\bar{P_{Q,\epsilon}}),F(A_l))$, 
and since  $A_l, l\in\mathbbm{N}$, is finite it follows that 
\[ dist(F(P_{Q,\epsilon}),F(A_l)) 
   = \max\limits_{p\in P_{Q,\epsilon}} \min\limits_{a\in A_l} \|F(p)-F(a)\|_\infty
\] 
That is, the claim is right for an archive $A_l, l\in\mathbbm{N},$ if for every 
$p\in P_{Q,\epsilon}$ there exists an element $a\in A_l$ such that 
$\|F(p)-F(a)\|_\infty < \Delta$. Thus, $F(P_{Q,\epsilon})$ must be contained in 
$C_{A_l,\Delta}$, where
\[ C_{A,\Delta} := \bigcup_{a\in A} B_\Delta^\infty (F(a)).
\]
First we show that if there exists an $l_0\in\mathbbm{N}$ with\\
$dist(F(P_{Q,\epsilon}),F(A_l)) < \Delta$, this property holds for all $l\geq l_0$. 
Assume that such an $l_0$ is given. Define
\begin{equation}
  \tilde{A} := \left\{ a\in A_{l_0} | \exists p\in P_{Q,\epsilon}: 
              \|F(p)-F(a) \|_\infty < \Delta \right\}
\end{equation}
Since it holds that
\[ p\in P_{Q,\epsilon}\;\mbox{and}\; a\in Q:\|F(p)-F(a)\|\leq \Delta \;\Rightarrow
   a\in P_{Q,\epsilon+1\Delta}
\]
it follows that $\tilde{A}\subset P_{Q,\epsilon+1\Delta}$, and thus, no element 
$a\in \tilde{A}$ will be discarded further on due to the construction of 
$ArchiveUpdateP_{Q,\epsilon}$. Since $dist(F(P_{Q,\epsilon}),F(A_l)) < \Delta$ it 
follows that for all $p\in P_{Q,\epsilon}$ there exists an element $a\in \tilde{A}$
such that $\|F(p)-F(a)\|_\infty<\Delta$. By the above discussion this holds for 
all $l\geq l_0$, and since no element $a\in \tilde{A}$ is discarded during the run
of the algorithm, and the claim follows. \\
It remains to show the existence of such an integer $l_0$, which we will do by 
contradiction: first we show that by using $ArchiveUpdateP_{Q,\epsilon}$ and under 
the assumptions made above only finitely many replacements can be done during the run 
of the algorithm. Then we construct a contradiction by showing that under the asssumptions
made above infinitely many replacements have to be done during the run of the algorithm 
with the given setting.\\
Let a finite archive $A_0$ be given. If a point $p\in\mathbbm{R}^n$ 
replaces a point $a\in A_0$ (see lines 4 and 7 of Algorithm \ref{alg:PQe2}) it 
follows by construction of $ArchiveUpdateP_{Q,\epsilon}$ that 

\begin{equation}
\label{eq:reduction}
  F(p)<_p F(a)-\Delta
\end{equation}

Since the relation `$\prec$' is transitive, there exists for  every $a\in A$ a `history' 
of replaced points 
$a_i\in A_{l_i}$ where equation (\ref{eq:reduction}) holds for $a_i$ and $a_{i-1}$. 
Since $F(Q)$ is bounded there exist $l_i,u_i\in\mathbbm{R}, i=1,\ldots,k$, such that
$F(Q)\subset [l_1,u_1]\times\ldots\times[l_k,u_k]$. After $r$ replacements there
exists at least one $a\in A_{l(r)}$ such that the length $h$ of the history of $a$ is at least
$h \geq \lceil r/n_0 \rceil$, where $n_0$ is the maximal number of entries in the archive
(see above). Denote by $a_0\in A_0$ the root of the history of $a$. For $a,a_0$ it follows that 
\[ F(a)\leq F(a_0)-h\Delta
\]
For 
$\tilde{h} > h_{max}:= \Delta^{-1}\max_{i=1,\ldots,k} u_i-l_i$ we obtain a contradiction since 
in that case there exists $i\in\{1,\ldots,n\}$ with $f_i(a)<l_i$ and thus $F(a)\not\in F(Q)$. 
Hence it follows that there can be done only finitely many such replacements during the run of 
an algorithm.\\

Assume that such an integer $l_0$ as claimed above does not exist, that is, that 
$F(P_{Q,\epsilon})\not\subset C_{A_l,\Delta}$ for all $l\in\mathbbm{N}$. Hence there exists 
a sequence of points
\begin{equation}
\label{eq:hp1}
 p_i\in P_{Q,\epsilon}:\quad y_i=F(p_i)\in F(P_{Q,\epsilon})\backslash C_{A_i,\Delta}\quad \forall i\in\mathbbm{N}.
\end{equation}

Since $P_{Q,\epsilon}\subset Q$ and $Q$ is compact there exists an accumulation point 
$p^*\in \overline{P_{Q,\epsilon}}$, that is, there exists a subsequence $\{i_j\}_{j\in\mathbbm{N}}$ with
\begin{equation}
\label{eq:hp2}
 p_{i_j} \to p^*\in\overline{P_{Q,\epsilon}}\; \mbox{for} \; j\to \infty. 
\end{equation}
In \cite{SCT_epseff:07} it was shown that under the assumptions (A1)--(A3) it follows that
\begin{equation}
  \overline{\overset{\circ}{P_{Q,\epsilon}}} = \overline{P_{Q,\epsilon}},
\end{equation}
i.e., that $P_{Q,\epsilon}$ is not `flat' anywhere. Hence, the set 
\begin{equation}
 \tilde{U}_1:= B_{(\Delta - \tilde{\Delta})/2} ^\infty (y^*) 
  \cap \overset{\circ}{P_{Q,\epsilon}},
\end{equation}
where $y^*:=F(p^*)$, is not empty. By (\ref{eq:P=1}) it follows that there exists 
with probability one an $l_1\in\mathbbm{N}$ and an element
$\tilde{x}_1\in P_{l_0 + l_1}$ generated by {\em Generate()} with $\tilde{y}_1 = F(\tilde{x}_1)\in \tilde{U}_1$.  
There are two cases for the archive $A_{l_0+l_1}$: (a) $x_1$ can be discarded from the archive, or (b) $x_1$ is 
added to it. 
Assume first that $x_1$ is discarded. Since $x_1\in P_{Q,\epsilon}$ there exists no $\bar{x}\in Q$ such that
$\bar{x}$ $-\epsilon$-dominates $x_1$. Hence, $(D1)$ can not occur (see (\ref{eq:d1d2})), and thus, there must exist
an $a_2\in A_{l_0+l_1}$ such that $\|F(a_2)-F(x_1)\|_\infty\leq \Delta^*$ (see $(D2)$). 
Thus, whether $x_1$ is added to the 
archive or not there exists an $\tilde{a}_1\in A_{l_0+l_1}$ such that $\|F(\tilde{a}_1-y^*\|_\infty\leq \Delta$ 
(since in case $x_1$ is added to the archive $\tilde{a}_1=x_1$ can be chosen), and we obtain 
\begin{equation}
\label{eq:distanceDelta}
\|F(\tilde{a}_1)-\tilde{y}\|_\infty \leq \|F(\tilde{a}_1)-F(x_1)\|_\infty + \|F(x_1)-\tilde{y}\|_\infty  < \Delta \quad \forall \tilde{y}\in U_1
\end{equation}

By (\ref{eq:hp1}) and (\ref{eq:hp2}) there exist integers $j_1,\tilde{l}_1\in\mathbbm{N}$ with 
\begin{equation}
   y_{i_{j_1}}\in \tilde{U}_1\backslash C_{l_0+l_1+\tilde{l}_1,\Delta}.
\end{equation}
Since by (\ref{eq:distanceDelta}) it holds that $\|y_{i_{j_1}} - F(a_1)\|_\infty<\Delta$
it follows that $a_1\not\in A_{l_0+l_1+\tilde{l}_1}$, which is only possible via a
replacement in Algorithm \ref{alg:PQe2} (lines 4 and 7). \\
In an analogous way a sequence $\{a_i\}_{i\in\mathbbm{N}}$ of elements can be constructed
which have to be replaced by other elements. Since this leads to a sequence of
infinitely many replacements. This is a contradiction to the assumption, and the
proof is complete.\\
{\it Claim (b2)}: Let $\tilde{A}$ and $l_0$ as above, and let $l\geq l_0$. Further, let 
$x\in Q\backslash P_{Q,\epsilon+2\Delta}$, that is, there exists a $p\in P_{Q,\epsilon}$
such that $p\prec_{-(\epsilon+2\Delta)} x$. Since $l\geq l_0$ there exists an 
$a\in\tilde{A}\subset A_l$ such that $\|F(p)-F(a)\|_\infty <\Delta$. Combining both facts
we see that $a\prec_{-(\epsilon+1\Delta)} x$. Thus, no element 
$x\in Q\backslash P_{Q,\epsilon+2\Delta}$ is contained in $A_l, l\geq l_0$, or will ever
be added to the archive further on. The claim follows since the archive can only contain
elements in $P_{Q,\epsilon+2\Delta}$ (see also Examples \ref{ex:dist1} and \ref{ex:dist2}).\\
{\it Claim (b3)}: follows immediately by (b1) and (b2).
\end{proof}

\begin{rmks}
\label{rmks:delta}
\begin{enumerate}
\item[(a)] For $\Delta = \Delta^* = 0$ the archiver coincides with the one proposed in
\cite{SCT_epseff:07}, which reads as 
\begin{equation}
\label{eq:update}
  UpdateP_{Q,\epsilon}(P,A) :=
     \{x \in P \cup A : y \not\prec_{-\epsilon} x \,\, \forall y \in P \cup A\}.
\end{equation}
\item[(b)]
The convergence result holds for a scalar $\Delta_0\in\mathbbm{R}_+$ which is used for
the discretization of the $\epsilon$-efficient front. However, analogue results can 
be obtained by using a vector $\Delta\in\mathbbm{R}_+^k$. In this case, the exclusion
strategy in line 3 of Algorithm 2 has to be replaced by
\begin{equation}
  \not\exists a_2\in A:\quad F(p) \in B(F(a_2),\Delta),
\end{equation}
where 
\[ B(y,\Delta):= \{x\in\mathbbm{R}^k\, :\, |x_i-y_i|\leq  \Delta_i,\, i=1,..,k \}.
\]
Further, elements $a$ have to be discarded from the archive if they are 
$-(\epsilon+\Delta)$ dominated by $p$ (lines 6-8).

\item[(c)] In the algorithm the discretization is done in the image space (line
3 of Algorithm 2). By replacing this exclusion strategy by
\begin{equation}
 \not\exists a_2\in A: \; d_\infty(a,p)\leq \Delta^*,
\end{equation}
an analogue result with discretization in parameter space can be obtained. 
This will lead on one hand to approximations which could be more 'complete', but 
on the other hand certainly to archives with much larger magnitudes since 
$P_{Q,\epsilon}$ is $n$-dimensional (see also the discussion in Section 4).

\item[(d)] The parameter $\Delta^*\in\mathbbm{R}_+$ with $\Delta^*<\Delta$ is used
for theoretical purposes. In practise, $\Delta^*=\Delta$ can be chosen.

\item[(e)] Note that the convergence result also holds for discrete models. 
In that case, assumption  (\ref{eq:P=1}) can be modified using Markov chains such 
that it can easier be verified (see e.g., \cite{Rudolph00}).
\end{enumerate}
\end{rmks}

The next two examples show that with using 
$ArchiveUpdateP_{Q,\epsilon}$ one cannot
prevent to maintain points $x\in P_{Q,\epsilon + 2\Delta}\backslash P_{Q,\epsilon}$ in
the limit archive, and that the distance between $F(P_{Q,\epsilon + 1\Delta})$ 
(respectively $F(P_{Q,\epsilon + 2\Delta})$) and $F(P_{Q,\epsilon})$ can get large
in some (pathological) examples.

\begin{exam}
\label{ex:dist1}
Consider the following MOP:
\begin{equation}
  F:\mathbbm{R}\to\mathbbm{R},\qquad F(x)=x
\end{equation}
Let $Q=[0,5]$, $\epsilon=1, \Delta=0.1$, and let $\Delta^*=\Delta$. Thus, we have
$P_{Q,\epsilon}=[0,1]$.
Assume that $A=\{a_1\}$ with $a_1=1.2$. If next $a_2=0.1$ is considered, it will be 
inserted into the archive since $d_\infty(F(a_1),F(a_2))> \Delta$ and since 
$a_2\in P_{Q,\epsilon}$ is not $-\epsilon$-dominated by $a_1$ nor by any other point
$x\in Q$, and will thus remain in the archive further on. Since $a_2$ is not
$-(\epsilon+\Delta)$-dominating $a_1$ we have for the updated archive $A=\{a_1, a_2\}$.
Hence, no element $a\in[0,\Delta]$ will be taken to the archive since for these
points it holds $d_\infty(F(a),F(a_2))\leq \Delta^*$, and thus, 
$a_2\in P_{Q,\epsilon + 2\Delta}\backslash P_{Q,\epsilon}$ will not be discarded from
the archive during the run of the algorithm. \\
When on the other side $a_1=0$ is taken to the archive, no element 
$a\in Q\backslash P_{Q,\epsilon}$ will ever be accepted further on.
\end{exam}

\begin{exam}
\label{ex:dist2}
Let the MOP be given by $F:\mathbbm{R}\to\mathbbm{R}^2$, where 
\begin{equation}
\label{eq:exam2}
  f_1(x) = |x+1|,\qquad f_2(x) = \left\{\begin{tabular}{r@{\quad}l} $|x-1|$ & for $x\leq 1$\\
  $\alpha |x-1|$& for $x>1$\end{tabular}\right. ,
\end{equation}
where $\alpha\in (0,1)$ (see also Figure \ref{fig:exam2}). For simplicity we assume that 
$\epsilon=(\bar{\epsilon},\bar{\epsilon})\in\mathbbm{R}_+^2$. It is $P_Q = [-1,1]$
with 
\begin{equation}
F(-1) = (0, 2),\qquad F(1) = (2,0)
\end{equation}
Further, it is
\begin{equation}
F(-1-\bar{\epsilon}) = 
   (\bar{\epsilon}, 2+\bar{\epsilon}),\qquad F(1+\frac{\bar{\epsilon}}{\alpha}) 
               = (2+\frac{\bar{\epsilon}}{\alpha},\bar{\epsilon})
\end{equation}
Using this and some monoticity arguments on $f_1$ and $f_2$ we see that 
\begin{equation}
  P_{Q,\epsilon} = \left(-1-\bar{\epsilon}, 1+\frac{\bar{\epsilon}}{\alpha}\right]
\end{equation}
Since $F(1+\frac{\bar{\epsilon}+\Delta}{\alpha}) 
               = (2+\frac{\bar{\epsilon}+\Delta}{\alpha},\bar{\epsilon}+\Delta)$ 
it follows that 
\begin{equation}
 dist(F(P_{Q,\epsilon + 1\Delta}), F(P_{Q,\epsilon})) = \frac{\Delta}{\alpha},
\end{equation}
which can get large for small values of $\alpha$.

\begin{figure}
\begin{center}
\subfigure[Parameter space]{
\epsfig{file=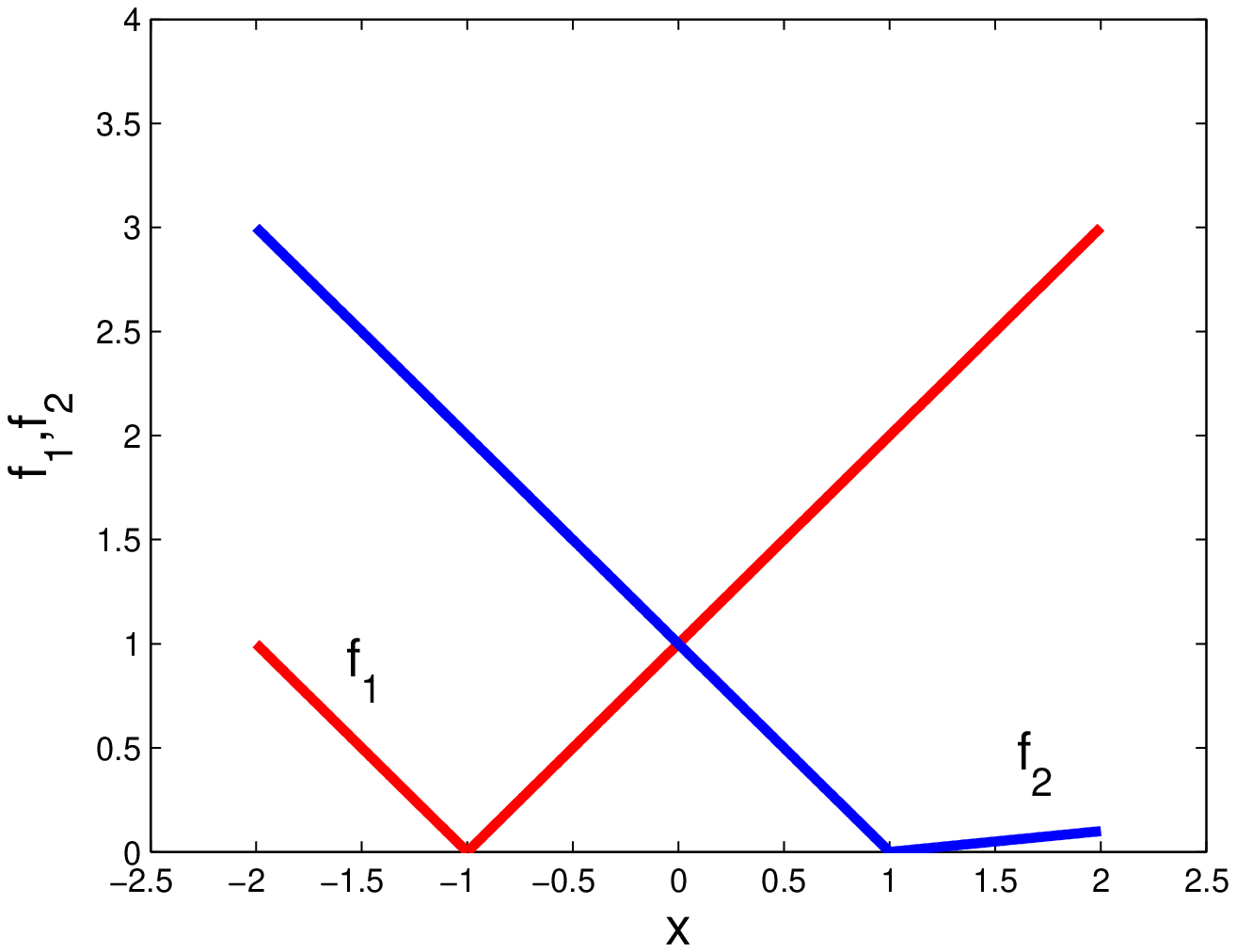,width=6cm}}
\hfill
\subfigure[Image space]{
\epsfig{file=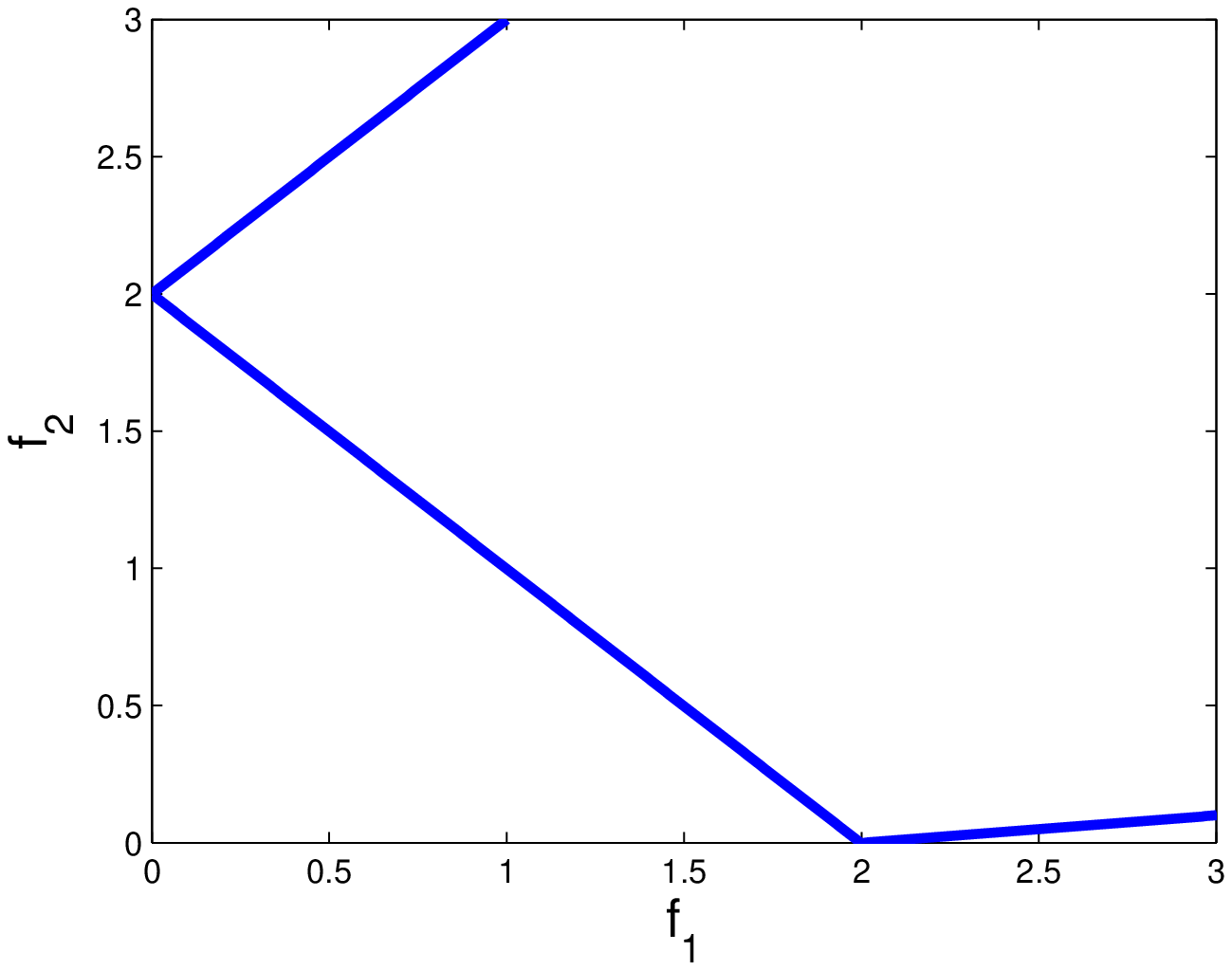,width=6cm}}
\caption{Example of MOP (\ref{eq:exam2}) for $\alpha=0.1$.}
\label{fig:exam2}
\end{center}
\end{figure}
\end{exam}

\section{Bounds on the Archive Sizes}
Here we give an upper bound $U$ on the size of the limit archive obtained by the 
novel strategy, and discuss that the order of $U$ is already optimal.

\begin{thm}
\label{thm:bounds}
Let $\epsilon\in\mathbbm{R}_+^k$, $\Delta^*, \Delta\in\mathbbm{R}_+$ with $\Delta^*<\Delta$ 
be given. Further let $m_i = \min_{x \in Q} f_i(x)$ and 
$M_i = \max_{x \in Q} f_i(x),$ $1 \leq i \leq k$, and $l_0$ as in Theorem \ref{thm:convergence}.
Then, when using $\mathit{ArchiveUpdateP_{Q,\epsilon}}$, the archive size maintained in
Algorithm~\ref{alg:generic_emo} for all $l \geq l_0$ is bounded as

\begin{equation}
\label{eq:bound}
  |A_l| \leq \left(\frac{1}{\Delta^*}\right)^k 
        \sum\limits_{i=1}^k(\epsilon_i+2\Delta+\Delta^*)\prod\limits_{j=1\atop j\neq 
   i}^k(M_j-m_j+\Delta^*).
\end{equation}
\end{thm}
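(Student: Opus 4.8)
The plan is to turn the $\Delta^*$-separation of the archive (Theorem~\ref{thm:convergence}(a)) into a packing bound in image space, and then to estimate the relevant volume, which will be that of a thin shell around the Pareto front. First I would record the two ingredients already available: by Theorem~\ref{thm:convergence}(a) any two distinct $a,\tilde a\in A_l$ satisfy $\|F(a)-F(\tilde a)\|_\infty\geq\Delta^*$, and for $l\geq l_0$ the proof of Theorem~\ref{thm:convergence}(b2) gives $F(A_l)\subseteq F(P_{Q,\epsilon+2\Delta})$. The separation implies that the open boxes $B^\infty_{\Delta^*/2}(F(a))$, $a\in A_l$, are pairwise disjoint, each of $k$-dimensional volume $(\Delta^*)^k$. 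Let $\mathcal{U}:=\bigcup_{a\in A_l}B^\infty_{\Delta^*/2}(F(a))$ and $\mathcal{R}:=F(P_{Q,\epsilon+2\Delta})\oplus B^\infty_{\Delta^*/2}(0)$ (Minkowski sum). Then $\mathcal{U}\subseteq\mathcal{R}$ and, by disjointness, $|A_l|\,(\Delta^*)^k=\mathrm{vol}(\mathcal{U})\leq\mathrm{vol}(\mathcal{R})$, so everything reduces to bounding $\mathrm{vol}(\mathcal{R})$.

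Next I would localize $F(P_{Q,\epsilon+2\Delta})$ in image space. Let $S:=F(Q)+\mathbbm{R}^k_{\geq 0}$ be the region dominated by the feasible image set, whose lower-left boundary is the Pareto front. Unwinding the definition of $-(\epsilon+2\Delta)$-dominance, a point $x\in P_{Q,\epsilon+2\Delta}$ has image satisfying $F(x)\in F(Q)\subseteq S$, while the non-$-(\epsilon+2\Delta)$-domination forbids $F(x)$ from lying in the properly dominated part of the upward shift $S+(\epsilon+2\Delta)$. Hence, up to the measure-zero boundary caused by the `$\neq$' clause, $F(P_{Q,\epsilon+2\Delta})$ is contained in the shell $\big(S\setminus\mathrm{int}(S+(\epsilon+2\Delta))\big)$ intersected with the bounding box $\prod_{i}[m_i,M_i]$.

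It then remains to estimate the volume of the $\Delta^*/2$-neighbourhood of this shell. For an upper set $U$ and a direction $e_i$, shifting by $w$ removes a layer whose fibre over each line parallel to $e_i$ has length at most $w$, so $\mathrm{vol}\big((U\setminus(U+we_i))\cap\mathrm{box}\big)\leq w\prod_{j\neq i}(\text{side}_j)$. Decomposing the total shift $\epsilon+2\Delta$ into its $k$ coordinate shifts and telescoping, and carrying the $\Delta^*/2$-expansion through (which enlarges each side from $M_j-m_j$ to $M_j-m_j+\Delta^*$ and each shell thickness from $\epsilon_i+2\Delta$ to $\epsilon_i+2\Delta+\Delta^*$), yields
\[
  \mathrm{vol}(\mathcal{R})\;\leq\;\sum_{i=1}^{k}(\epsilon_i+2\Delta+\Delta^*)\prod_{j=1\atop j\neq i}^{k}(M_j-m_j+\Delta^*).
\]
Dividing by $(\Delta^*)^k$ gives the claimed bound on $|A_l|$.

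The main obstacle is Steps~2 and~3: making the shell containment precise (handling the `$\neq$' boundary and invoking (A1)--(A3) so that the front genuinely bounds $S$ from below), and then pushing the $\Delta^*/2$-neighbourhood through the telescoping/fibre estimate so that the thicknesses become \emph{exactly} $\epsilon_i+2\Delta+\Delta^*$ and the sides exactly $M_j-m_j+\Delta^*$; the packing step itself is routine once the separation is quoted.
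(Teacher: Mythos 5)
Your proof is correct, but it reaches the volume bound by a genuinely different route than the paper. The packing step is the same in both: by Theorem \ref{thm:convergence}(a) the boxes $B^\infty_{\Delta^*/2}(F(a))$, $a\in A_l$, are pairwise disjoint, each has volume $(\Delta^*)^k$, and for $l\geq l_0$ they all lie in a $\Delta^*/2$-neighbourhood of $F(P_{Q,\epsilon+2\Delta})$, so $|A_l|$ is at most that neighbourhood's volume divided by $(\Delta^*)^k$. The difference is how that volume is estimated. The paper normalizes the Pareto front to be the graph of a monotone map $\Phi_f$ over $K=[m_1,M_1]\times\ldots\times[m_{k-1},M_{k-1}]$, assumes $f$ smooth (replacing it by a smooth minorant if necessary), bounds the $(k-1)$-dimensional surface area $\int_K\sqrt{\|\nabla f\|^2+1}\,du$ via partial integration and monotonicity (Appendix 1), and then ``thickens'' this area by $\epsilon_i+2\Delta$ per coordinate, appealing to the nature of $-\epsilon$-dominance. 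You instead work directly with the upper set $S=F(Q)+\mathbbm{R}^k_{\geq 0}$, place $F(P_{Q,\epsilon+2\Delta})$ in the shell $\bigl(S\setminus\mathrm{int}(S+(\epsilon+2\Delta))\bigr)$ inside the bounding box, and bound the shell's volume by telescoping the vector shift into $k$ coordinate shifts, using that each fibre of $U\setminus(U+we_i)$ along $e_i$ has length at most $w$ for an upper set $U$. Your route is more elementary and, in fact, tighter as a piece of rigor: it needs no smoothness or graph assumption on the front, the interior in your shell containment disposes of the ``$\neq$'' clause exactly (your ``up to measure zero'' hedge is not even needed), and the $\Delta^*/2$-expansion passes through cleanly, since $\bigl(S\setminus\mathrm{int}(S+v)\bigr)\oplus[-\tfrac{\Delta^*}{2},\tfrac{\Delta^*}{2}]^k$ sits inside the shell of the translated upper set $S-\tfrac{\Delta^*}{2}\mathbbm{1}$ with shift $v+\Delta^*\mathbbm{1}$ --- precisely the step the paper handles informally (``Considering this and the nature of $-\epsilon$-dominance\ldots''). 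What the paper's argument buys in exchange is the intermediate, geometrically meaningful estimate on the $(k-1)$-dimensional area of the Pareto front itself (inherited from \cite{slcdt:07}), which makes the ``area times thickness'' heuristic explicit and is reused elsewhere; your argument bypasses that object entirely.
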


\begin{proof}
Let $l\geq l_0$ and the archive $A_l$ be given. Since $A_l\subset P_{Q,\epsilon+2\Delta}$ 
(see Theorem \ref{thm:convergence}) we are interested in an upper bound on the volume of
$F(P_{Q,\epsilon+2\Delta})$. For this, we consider first the $(k-1)$-dimensional volume
of the Pareto front $F(P_Q)$. Due to the nature of nondominance we can assume that
$F(P_Q)$ is located in the graph of a map
\begin{equation}
\label{eq:dominating_map}
\begin{split}
\Phi_f &:K\to\mathbbm{R}^k\\
\Phi_f(u_1,\ldots,u_{k-1}) &= (u_1,\ldots,u_{k-1},f(u_1,\ldots,u_{k-1})),
\end{split}
\end{equation}
where $K:=[m_1,M_1]\times \ldots \times[m_{k-1},M_{k-1}]$ and $f:K\to [m_k,M_k]$ which
satisfies the monotonicity conditions
\begin{equation}
\label{eq:monotonicity1}
\begin{split}
f(u_1,\ldots,u_{i-1},x_i,u_{i+1},\ldots,u_{k-1}) \leq f(u_1,\ldots,u_{i-1},y_i,u_{i+1},\ldots,u_{k-1})\\
\forall i=1,\ldots, k-1,\\
u_j\in[m_j,M_j], j=1,\ldots,i-1,i+1,\ldots,k-1,\\
x_i, y_i\in [m_i,M_i], x_i\leq y_i.
\end{split}
\end{equation}
Further, we can assume that $f$ is sufficiently smooth. If not, we can replace $f$ by
a smooth function $\tilde{f}$ such that the volume of $\Phi_{\tilde{f}}$ is larger
than the volume of $\Phi_{f}$ as the following discussion shows: \\
if $f(m_i,\ldots,m_{k-1}) = m_k$, the Pareto front consists of one point,
$F(P_Q) = \left\{(m_1,\ldots,m_k)\right\}$, and has minimal volume $0$. Since we are interested
in upper bounds on the volume we can omit this case. Doing so, a smooth function
$\tilde{f}:K\to [m_k,M_k]$ exists with $\tilde{f}(x)\leq f(x),\, \forall x\in K$ that
fulfills the monotonicity conditions (\ref{eq:monotonicity1}). The $(k-1)$-dimensional volume
of $\Phi_{\tilde{f}}$ is obviously larger than the volume of $\Phi_{f}$. Under the smoothness
assumption we can replace condition (\ref{eq:monotonicity1}) by

\begin{equation}
\label{eq:monotonicity2}
  \frac{\partial f}{\partial u_i} u \leq 0,\quad \forall u\in K, \; \forall i=1,\ldots,k-1.
\end{equation}
The $(k-1)$-dimensional volume of $\Phi_f$ with parameter range $K$ is given by 
(see \cite{forster:84}):
\begin{equation}
  Vol_{k-1}(\Phi_f) = \int_K \sqrt{||\nabla f||^2 + 1}du,
\end{equation}
where $\nabla f$ denotes the gradient of $f$.
Analogue to \cite{slcdt:07} (see also Appendix 1) the volume can be estimated by using 
partial integration and the monotonicity conditions (\ref{eq:monotonicity2}) by:
\begin{equation}
\label{eq:estimation_of_volume}
Vol_{k-1}(\Phi_f) \leq \sum\limits_{i=1}^k\prod\limits_{j=1\atop j\neq i}^k(M_j-m_j). 
\end{equation}
Considering this and the nature of $-\epsilon$-dominance we can bound the $k$-dimensional 
volume of $F(P_{Q,\epsilon+2\Delta})$ by: 
\begin{equation}
Vol_k(F(P_{Q,\epsilon+2\Delta})) \leq \sum\limits_{i=1}^k (\epsilon_i+2\Delta) 
 \prod\limits_{j=1\atop j\neq i}^k(M_j-m_j), 
\end{equation}

Since $\|F(a_1)-F(a_2)\|\geq \Delta^*$ for 
all $a_1, a_2\in A_l$ it follows that the boxes
\begin{equation}
   B_{\frac{1}{2}\Delta^*}^\infty(F(a)),\quad a\in A_l,
\end{equation}
are mutually nonoverlapping. Further, if $F(a)\in F(P_{Q,\epsilon+2\Delta})$, then
$B_{\frac{1}{2}\Delta^*}^\infty(F(a))$ is included in a $\Delta^*/2$-neighborhood 
$\tilde{F}$ of $F(P_{Q,\epsilon+2\Delta})$ with 
\begin{equation}
 Vol_k(\tilde{F}) \leq \sum\limits_{i=1}^k (\epsilon_i+2\Delta+\Delta^*) 
 \prod\limits_{j=1\atop j\neq i}^k(M_j-m_j+\Delta^*). 
\end{equation}
The maximal number of entries in $A_l$ can now be estimated by
\begin{equation}
|A_l| \leq \frac{Vol_k(\tilde{F})}{Vol_k(B_{\frac{1}{2}\Delta^*}^\infty(F(a)))},
\end{equation}
and the claim follows since the volume of $B_{\frac{1}{2}\Delta^*}^\infty(F(a))$ is 
obviously given by $(\Delta^*)^k$. 
\end{proof}

In particular interesting is certainly the growth of the magnitudes of the (limit)
archives for vanishing discretization parameter $\Delta$. Since for every meaningful
computation the value $\Delta$ will be smaller than every entry of $\epsilon$, we can 
assume $\epsilon_i = c_i \Delta$ with $c_i>1$. Using (\ref{eq:bound}) and for simlicity
$\Delta=\Delta^*$ we see that
\begin{equation}
  |A_l| \leq \left(\frac{1}{\Delta}\right)^{k-1} 
  \sum\limits_{i=1}^k (c_i+3) \prod\limits_{j=1\atop j\neq i}^k(M_j-m_j+\Delta^*). 
\end{equation}
Thus, the growth of the magnitudes is of order 
$\mathcal{O}\left(\left(\frac{1}{\Delta}\right)^{k-1}\right)$ for $\Delta\to 0$.
Regarding the fact that $P_Q$, which is contained in $P_{Q,\epsilon}$ for all values
of $\epsilon\in\mathbbm{R}_+^k$, typically forms a 
$(k-1)$-dimensional object, we see that the order of the magnitude of the archive with
respect to $\Delta$ is already optimal. This is due to the fact that the discretization 
(line 3 of Algorithm 2) is realized in image space. An analogue result for a discretization 
in parameter space, however, can not hold since $P_{Q,\epsilon}$ is $n$-dimensional.

\begin{rmk}
In case the algorithm is modified as described in Remark \ref{rmks:delta} (c), the upper
bound for the magnitude of the archive is given by
\begin{equation}
|A_l^x| \leq \left(\frac{1}{\Delta^*}+1\right)^n \prod_{j=1}^n (b_i-a_i),
\end{equation}
where $Q\subset [a_1,b_1]\times\ldots\times [a_n,b_n]$ ($P_{Q,\epsilon+2\Delta}$ is certainly
included in $[a_1,b_1]\times\ldots\times [a_n,b_n]$, and maximal $1/\Delta^* + 1$ 
elements can be placed in each coordinate direction). To see that this bound is tight
we consider the example
\begin{equation}
  F:[0,1]^n\to\mathbbm{R}^k,\quad F(x)\equiv c_0\in\mathbbm{R}^k,
\end{equation}
and let $\Delta=1/s,\; s\in\mathbbm{N}$. Define 
$x_{i_1,\ldots,i_n}=(i_1\Delta,\ldots,i_n\Delta)$ and
\begin{equation}
  \mathcal{D} := \left\{ x_{i_1,\ldots,i_n} | 0\leq i_1,\ldots,i_n\leq s \right\}.
\end{equation}
Since $\mathcal{D}\subset [0,1]^n$ and $d_\infty(z_1,z_2)\geq \Delta > \Delta^*$
for all $z_1, z_2\in \mathcal{D}$, $z_1\neq z_2$, all points in $\mathcal{D}$ will
be accepted by the archiver (assuming that only points $z\in \mathcal{D}$ are 
inserted) leading to an archive $A$ with $|A|=|\mathcal{D}|=(s+1)^n$.\\
Since $P_{Q,\epsilon}$ is $n$-dimensional, the growth of the magnitudes of the
archives is also beyond this constructed example of order 
$\mathcal{O}\left(\left(\frac{1}{\Delta}\right)^n\right)$ for $\Delta\to 0$. This
makes a huge difference to the other archiver since for general MOPs we have
$n\gg k$.
\end{rmk}

\section{Numerical Results}
Here we demonstrate the practicability of the novel archiver on five examples. For this,
we run and compare $ArchiveUpdateP_{Q,\epsilon}$ for different values of $\Delta$ including
$\Delta_0 = 0$, which is the archiver which accepts all test points which are not
$-\epsilon$ dominated by any other test point (see Remark 4.3 (a)). To obtain a fair comparison 
we have decided to take a random search operator for the generation process (the same 
sequence of points for all settings). An implementation of the archiver including these 
examples can be found in \cite{url:archives}.

\subsection{Example 1}
First we consider the MOP suggested by Tanaka (\cite{tanaka:95}):
\begin{equation}
\label{eq:MOP_Tanaka}
F :\mathbbm{R}^2 \to \mathbbm{R}^2, \quad F(x_1,x_2) = (x_1, x_2)
\end{equation}
\indent where
\begin{equation*}
\begin{split}
  C_1(x) &= x_1^2 + x_2^2 - 1 - 0.1\cos(16 \arctan (x_1/x_2)) \geq  0 \\
  C_2(x) &= (x_1-0.5)^2 + (x_2-0.5)^2 \leq 0.5
\end{split}
\end{equation*}

Figure \ref{fig:tanaka} shows a numerical result for $N=200,000$ randomly chosen
points within $Q=[0,\pi]^2$ and for three different values of the discretization parameter:
$\Delta_0=0$, $\Delta_1=0.01$ and $\Delta_2=0.05$. As anticipated, the granularity
of the resulting archive increases with the value of $\Delta$. Thus, the approximation
quality decreases, but also the running time of the algorithm (see Table \ref{tab:tanaka}).

\begin{figure}
\begin{center}
\subfigure[$\Delta_0=0,\quad |A_{final}|=3824$]{
\epsfig{file=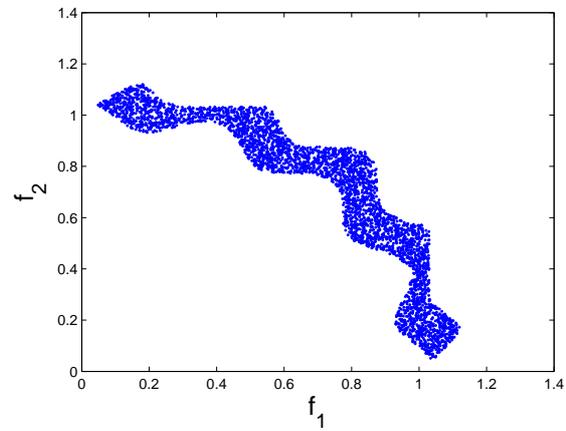,width=8cm}}
\vfill
\subfigure[$Delta_1=0.01,\quad |A_{final}|=834$]{
\epsfig{file=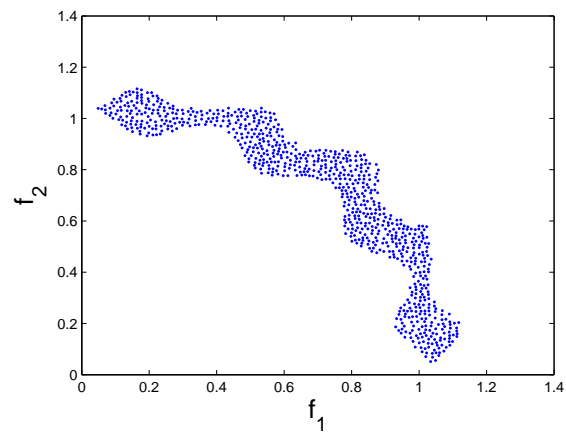,width=8cm}}
\vfill
\subfigure[$Delta_2=0.05,\quad |A_{final}|=73$]{
\epsfig{file=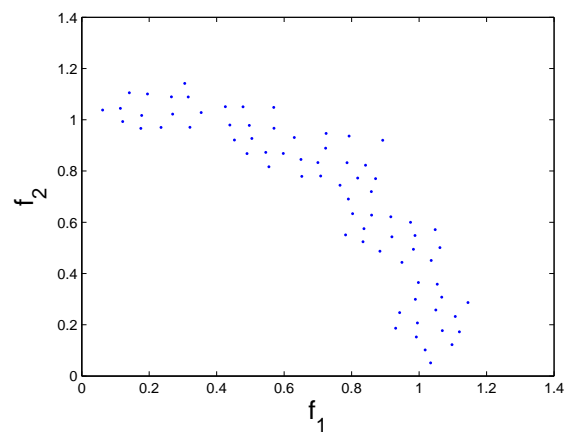,width=8cm}}
\caption{Results for MOP (\ref{eq:MOP_Tanaka}) for different values of 
$\Delta$ leading to different granularities of the approximation.}
\label{fig:tanaka}
\end{center}
\end{figure}

\begin{table}
\begin{center}
\caption{Comparison of the magnitudes of the final archive ($|A_{final}|$, rounded) and the 
corresponding update times ($T$, in seconds) for MOP (\ref{eq:MOP_Tanaka}) and for different
values of $\Delta$. We have taken the average result of 100 test runs.}
\label{tab:tanaka}
\begin{tabular}{c||c|c}
$\Delta$ & $|A_{final}|$ & $T$\\\hline\hline
$0$& 3836 & 32.98 \\
$0.01$ & 827 & 6.22\\
$0.05$ & 68 & 1.80
\end{tabular}
\end{center}
\end{table}

\subsection{Example 2}
Next, we consider the following MOP proposed in \cite{rudolph:06}:
\begin{equation}
\label{eq:MOP_Rudolph}
\begin{split}
F &:\mathbbm{R}^2 \to \mathbbm{R}^2\\
F(x_1,x_2) &= \left(\begin{tabular}{c} $(x_1 -t_1(c+2a)+a)^2 + (x_2-t_2b)^2$\\
   $(x_1 -t_1(c+2a)-a)^2 + (x_2-t_2b)^2$ \end{tabular}\right),
\end{split}
\end{equation}
\indent where
\[
t_1 = \mbox{sgn}(x_1) \min\left(\left\lceil \frac{|x_1|-a-c/2}{2a+c}\right\rceil, 1\right),
t_2 = \mbox{sgn}(x_2)\min\left(\left\lceil \frac{|x_2|-b/2}{b}\right\rceil, 1\right).
\]
The Pareto set consists of nine connected components of length $a$ with identical images. 
We have chosen the values $a=0.5$, $b=c=5$, $\epsilon=(0.1,0.1)$, and the domain as 
$Q=[-20,20]^2$. Figure \ref{fig:rudolph} shows some numerical results for $N=100,000$
randomly chosen points within $Q$ and for the three variants of the archiver
$ArchiveUpdateP_{Q,\epsilon}$: (a) $\Delta = (0,0)$, i.e., the archiver which aims to store
the entire set $P_{Q,\epsilon}$, (b) a discretization (in image space) using
$\Delta = (0.02, 0.02)$, and (c) the variant which is described in 
Remark \ref{rmks:delta} (c) using $\Delta_x=(0.1,0.1)$ for a discretization of the 
parameter space. As anticipated, the solution in (b) is more uniform in image space
compared to the solution in (c), which is, in turn, more uniform in parameter space. 
Note that the solution in (b), i.e., where the discretization has been done in image
space, already 'detects' all nine connected components in parameter space. This is
certainly due to the fact that the search has been done by selecting random points
wich are uniformly distributed within $Q$. One interesting point for future studies
would be to investigate if this capability still holds when other search strategies
are chosen (which include, e.g., local search strategies). 

\begin{figure}
\begin{center}
\subfigure[$\Delta = (0, 0)$, $|A_{final}| = 2150$]{
\epsfig{file=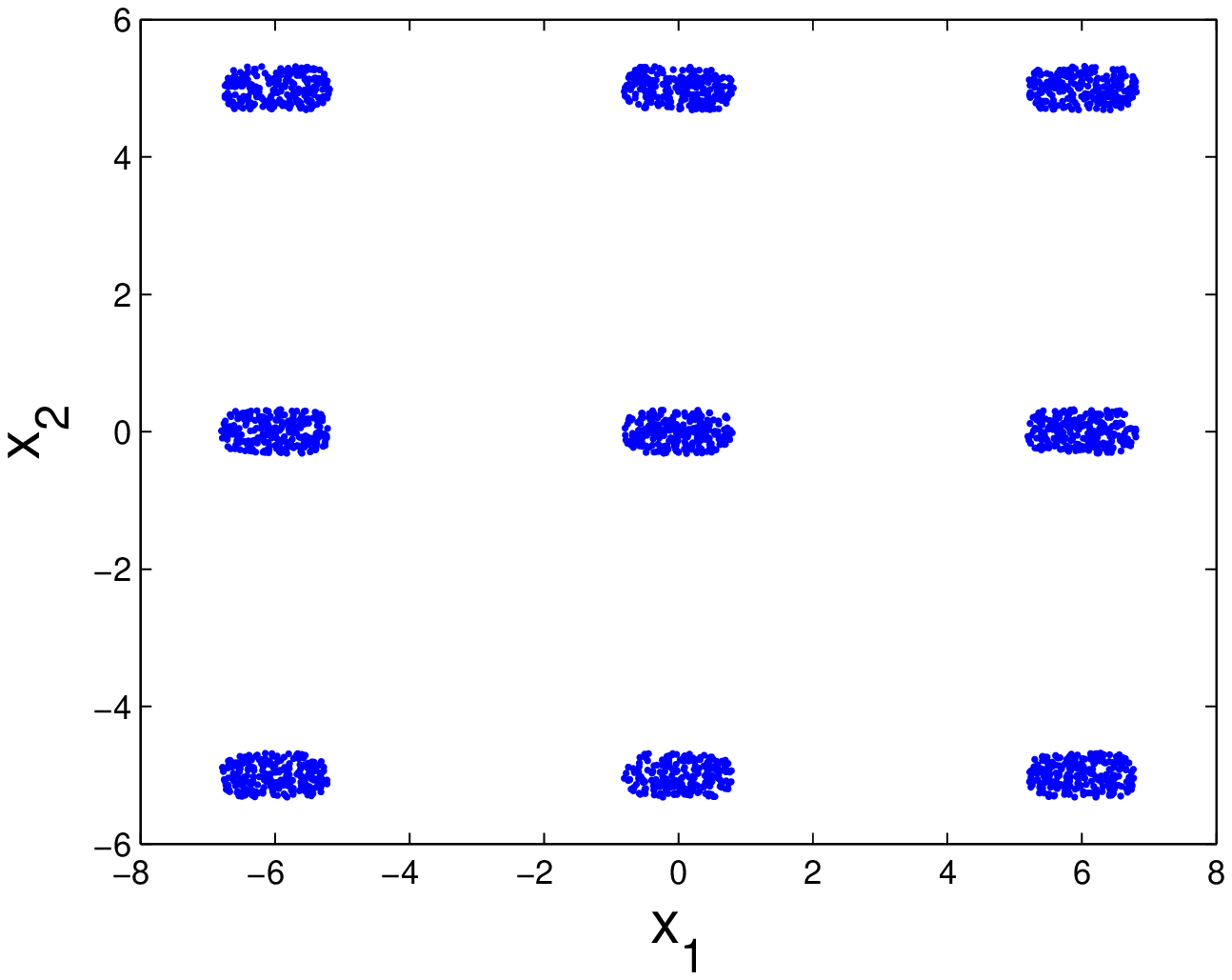,width=7cm}
\hfill
\epsfig{file=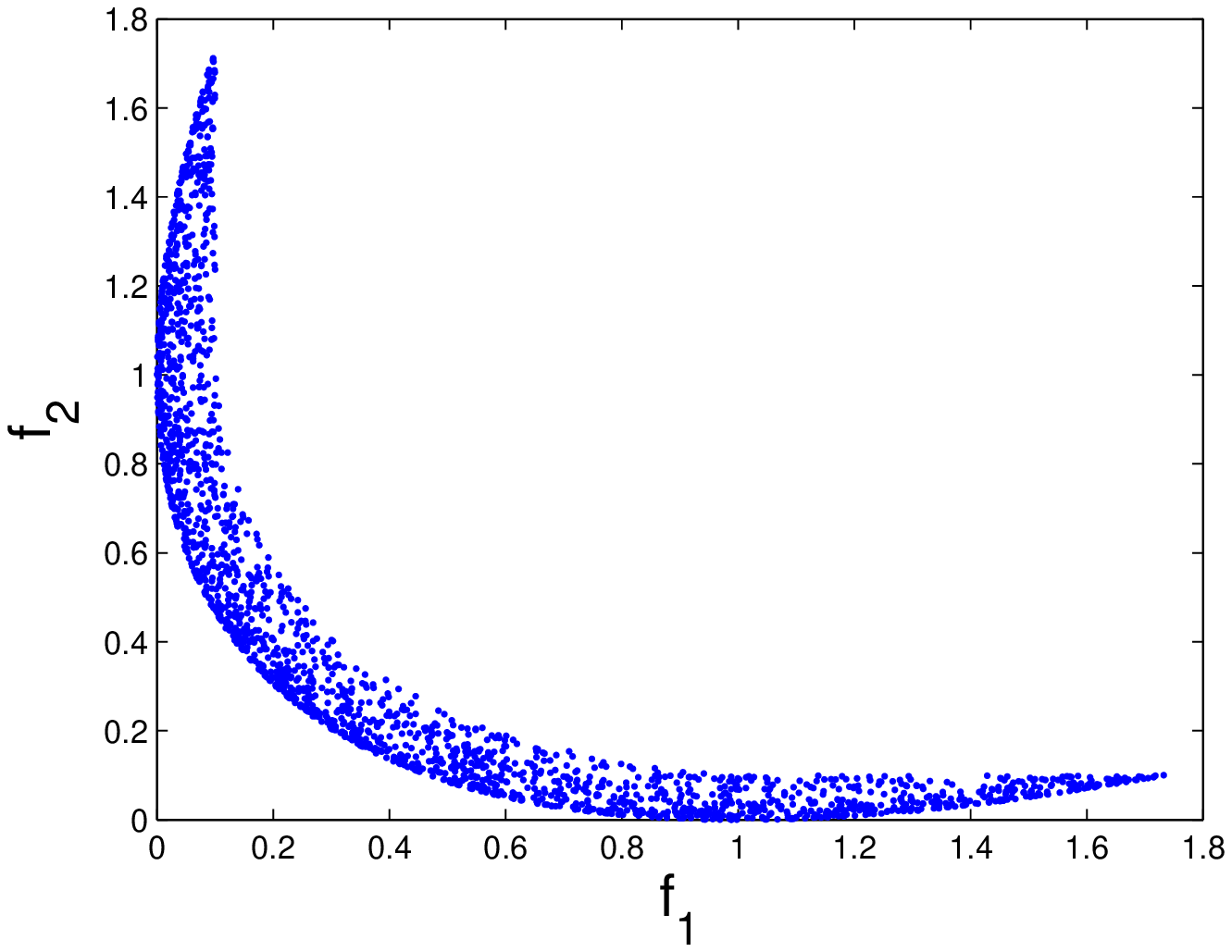,width=7cm}}
\vfill
\subfigure[$\Delta = (0.02, 0.02)$, $|A_{final}| = 365$ ]{
\epsfig{file=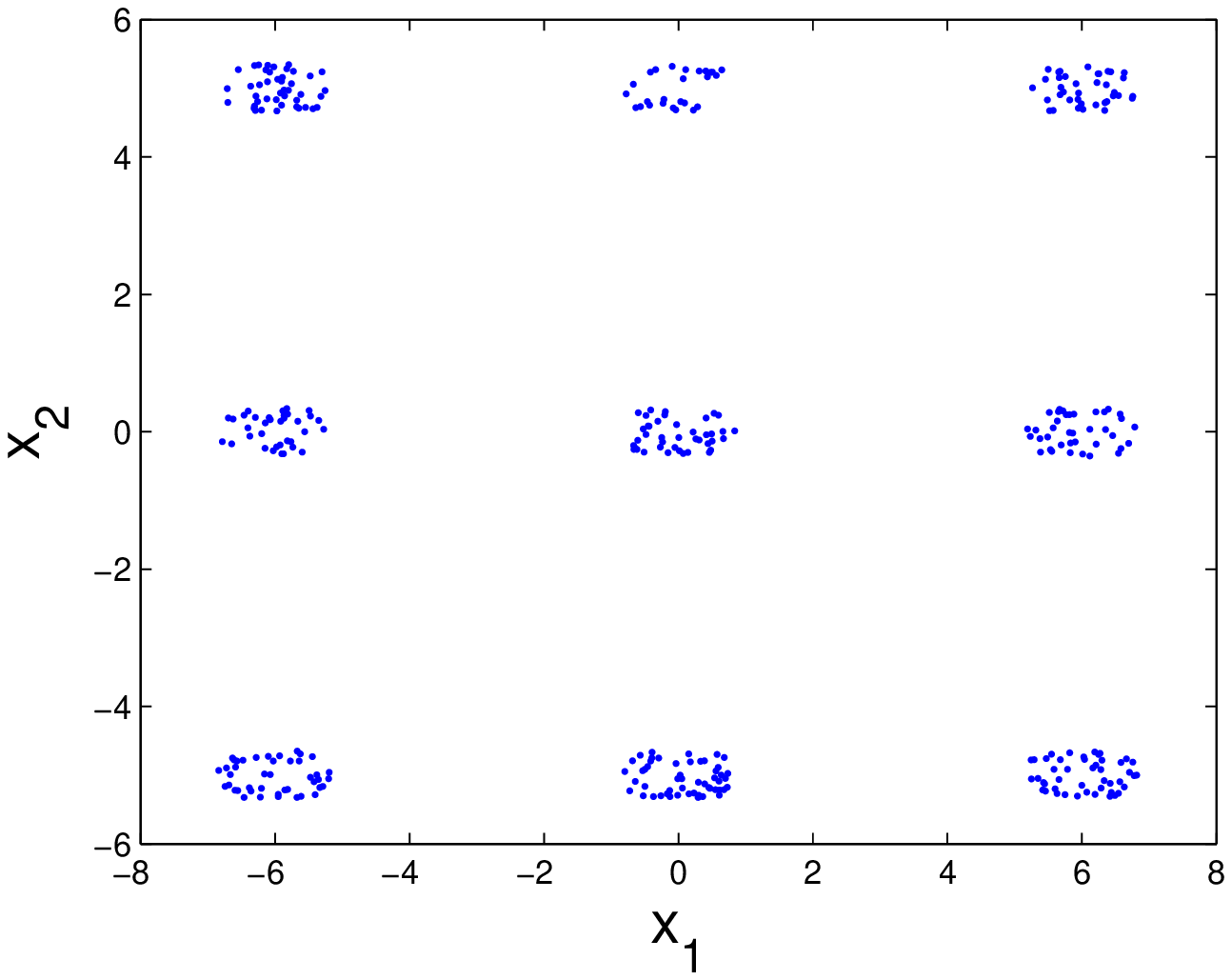,width=7cm}
\hfill
\epsfig{file=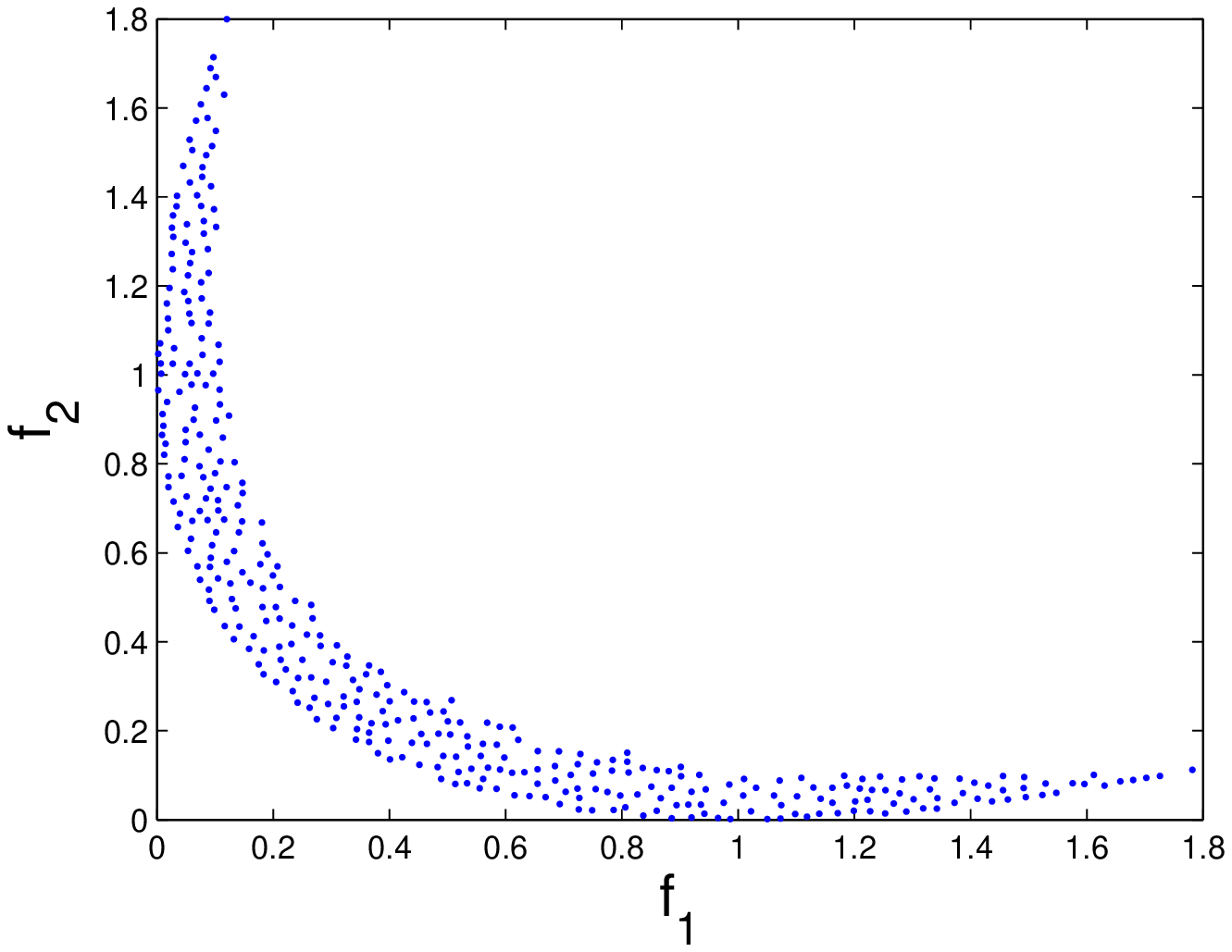,width=7cm}}
\vfill
\subfigure[$\Delta_x = (0.1, 0.1)$, $|A_{final}| = 410$]{
\epsfig{file=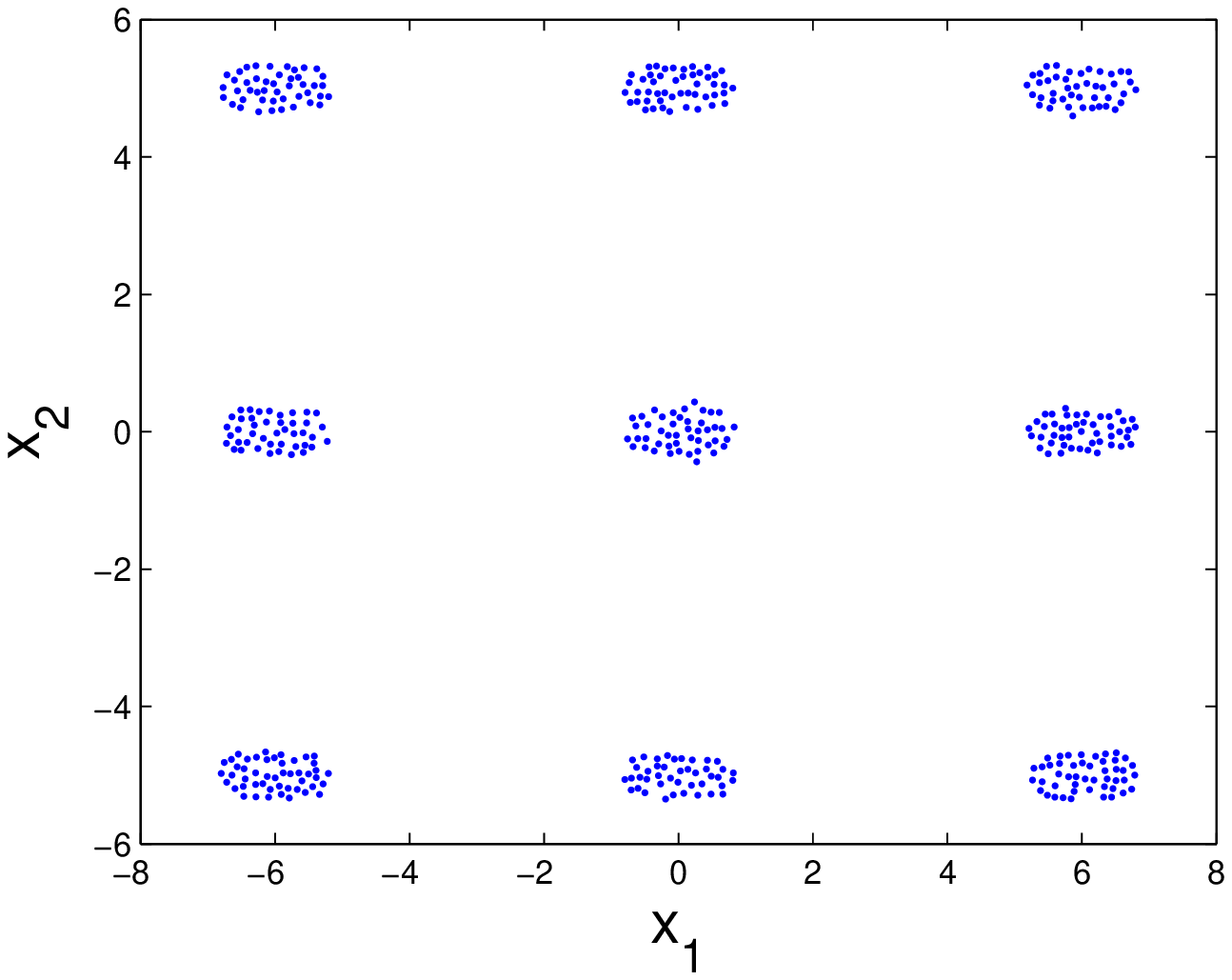,width=7cm}
\hfill
\epsfig{file=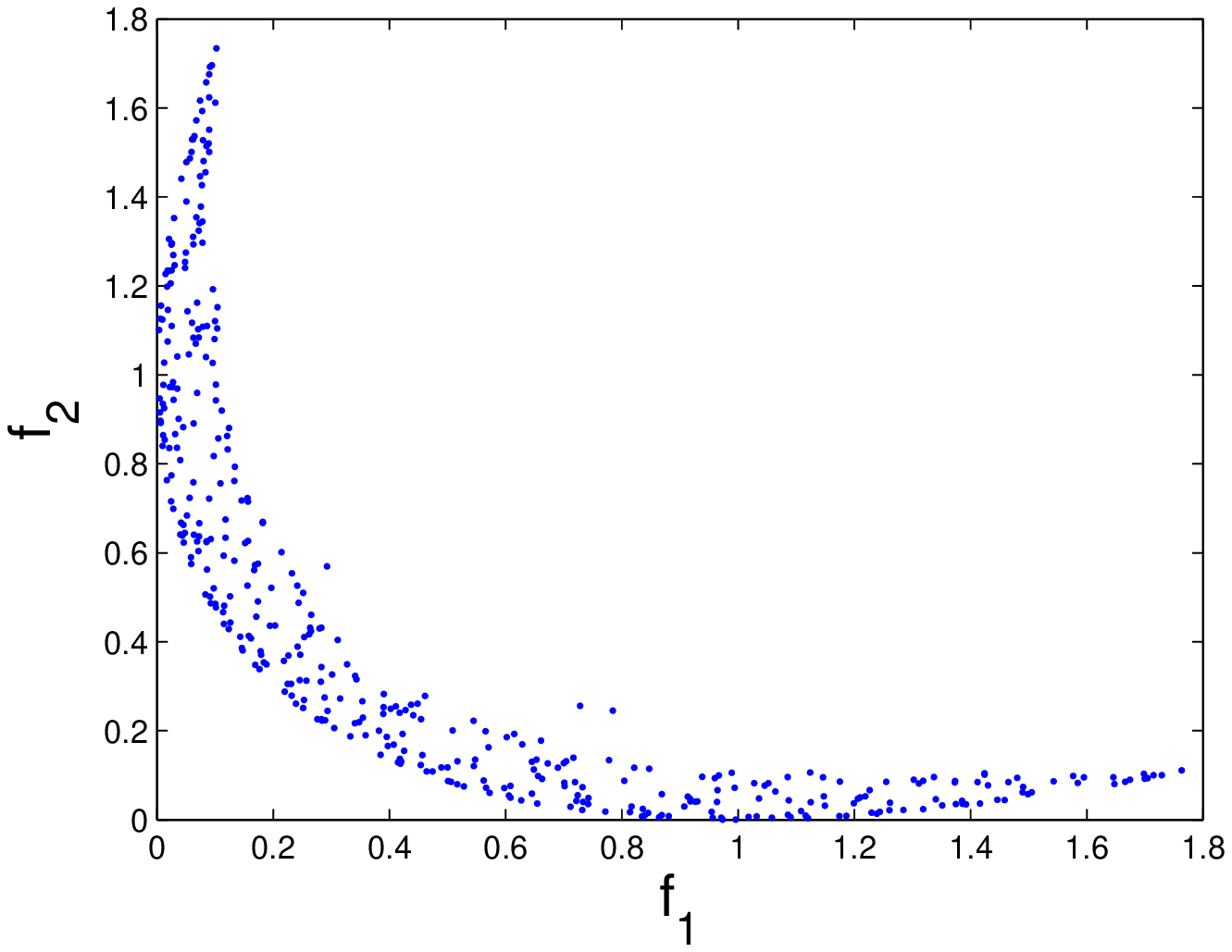,width=7cm}}
\caption{Numerical results for MOP (\ref{eq:MOP_Rudolph}) for the three
different variants of $ArchiveUpdateP_{Q,\epsilon}$.}
\label{fig:rudolph}
\end{center}
\end{figure}

\subsection{Example 3}
Finally, we consider the production model proposed in \cite{SSW:02}:

$f_1,f_2:\mathbbm{R}^{n}\to\mathbbm{R}$,
\begin{equation}
\label{eq:MOP_SSW}
\begin{split}
f_1(x) &= \sum_{j=1}^{n}x_j, \\
f_2(x) &= 1 - \prod_{j=1}^n(1-w_j(x_j)), 
\end{split}
\end{equation}
where
\begin{eqnarray*}
&w_j(z) = \left\{\begin{array}{l@{\quad}l}
 0.01\cdot \exp(-(\frac{z}{20})^{2.5}) & \quad\mbox{for}\quad j=1,2 \\[2mm]
 0.01\cdot \exp(-\frac{z}{15}) &  \quad\mbox{for}\quad 3\leq j \leq n
\end{array}\right.
\end{eqnarray*}

The two objective functions have to be interpreted as follows. $f_1$ represents the sum 
of the additional cost necessary for a more reliable production of $n$ items.  These items 
are needed for the composition of a certain product. The function $f_2$ describes the total 
failure rate for the production of this composed product.\\
Here we have chosen $n=5$, $Q=[0,40]^n$, and $\epsilon = (0.1, 0.001)$ which corresponds
to $10$ percent of one cost unit for one item ($\epsilon_1$), and to $0.1$ percent of the 
total failure rate ($\epsilon_2$). Figure \ref{fig:SSW} shows numerical results for
(a) $\Delta= (0,0)$ and (b) for $\Delta = \epsilon/3$. 
Note the symmetries in the model: it is e.g., $F(x_1,x_2,\ldots) = F(x_2,x_1,\ldots)$ by
which is follows that the two connected components at $x_1=0$ and $x_2=0$ (see
Figure \ref{fig:SSW} (a)) have the same image. Also in this case the archiver detects
both components though the discretization has been done in image space 
(Figure \ref{fig:SSW} (b)).

\begin{figure}
\begin{center}
\subfigure[$\Delta = (0, 0)$, $|A_{final}| = 5939$]{
\epsfig{file=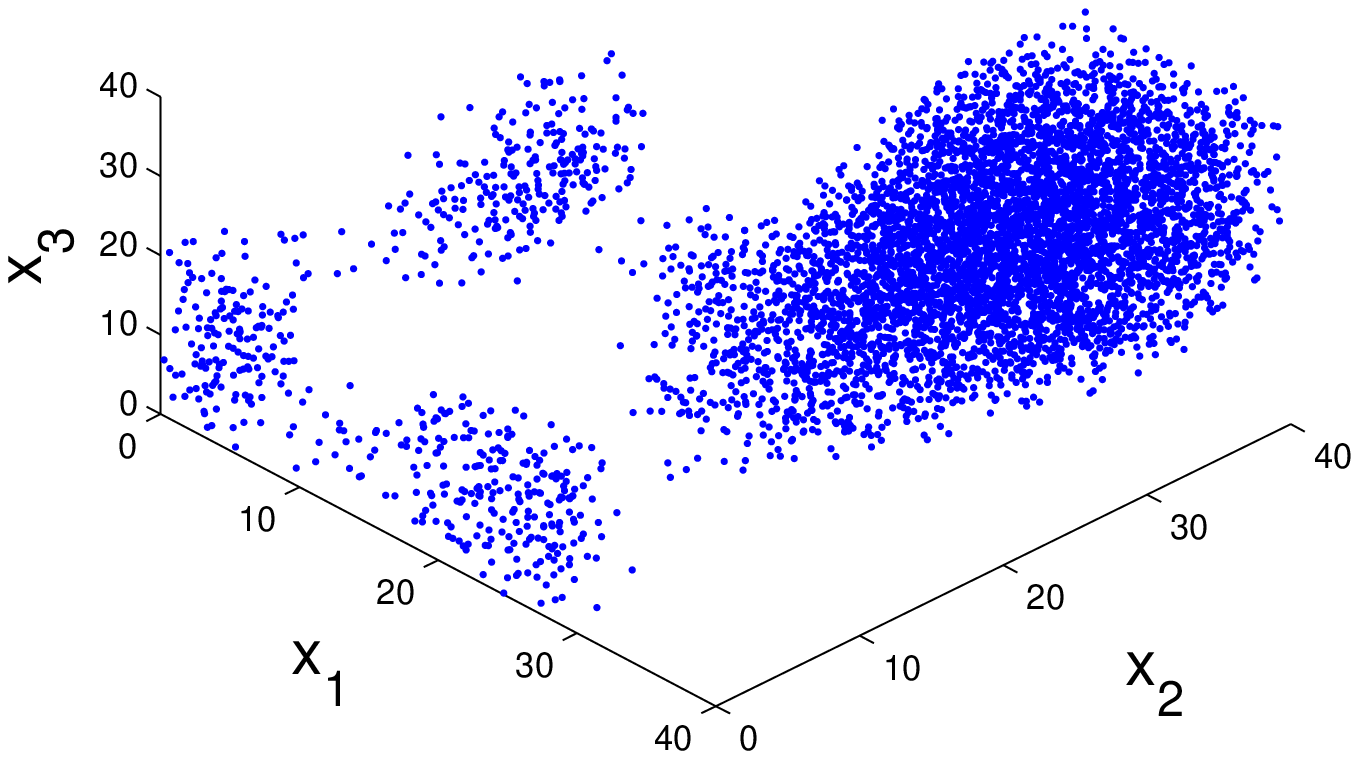,width=7cm}
\hfill
\epsfig{file=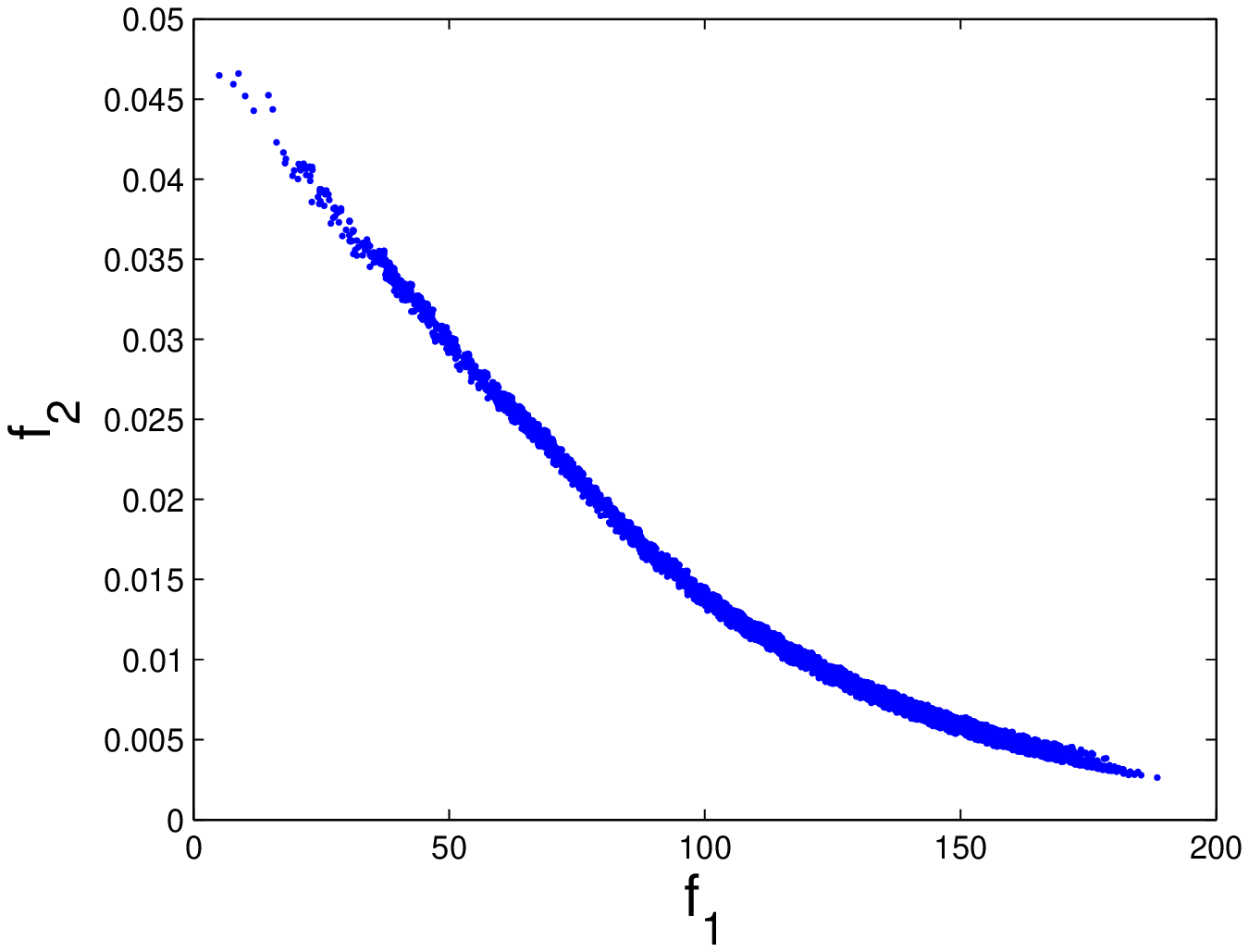,width=7cm}}
\vfill
\subfigure[$\Delta = (0.033, 0.00033)$, $|A_{final}| = 3544$]{
\epsfig{file=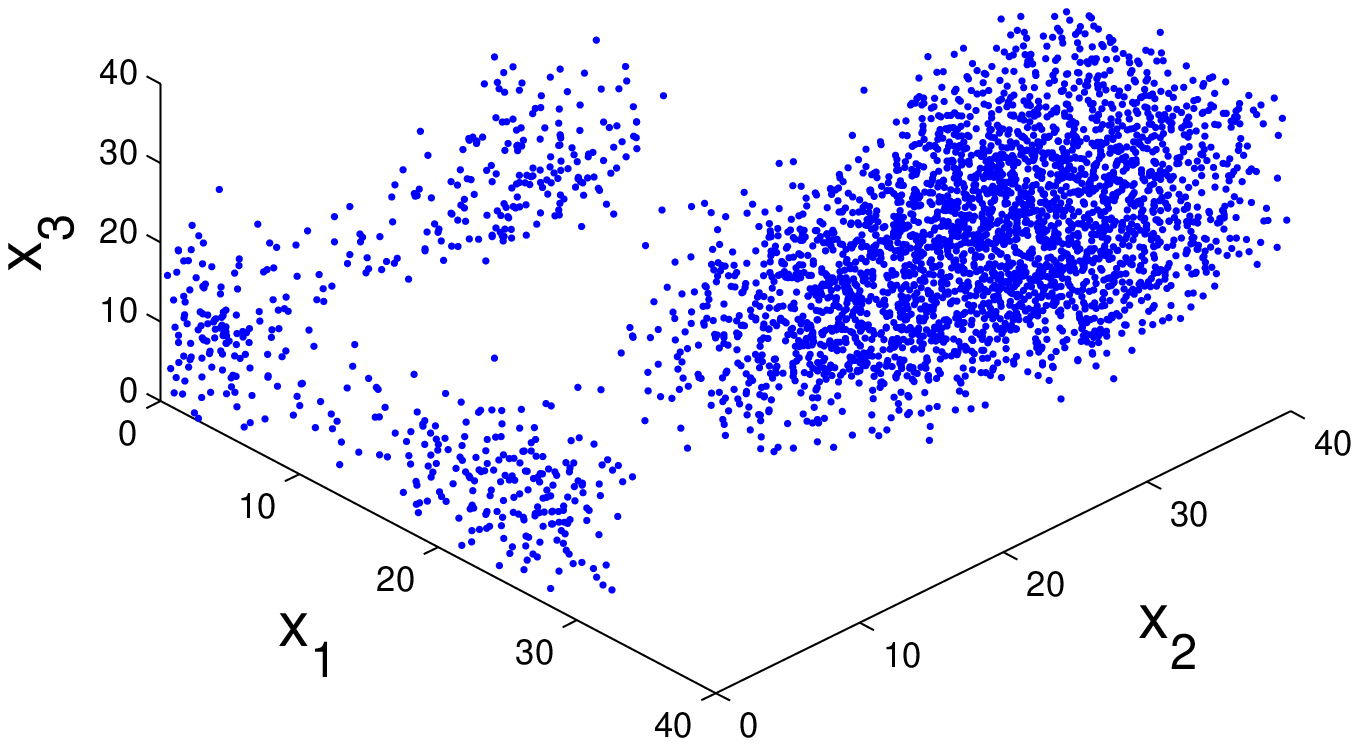,width=7cm}
\hfill
\epsfig{file=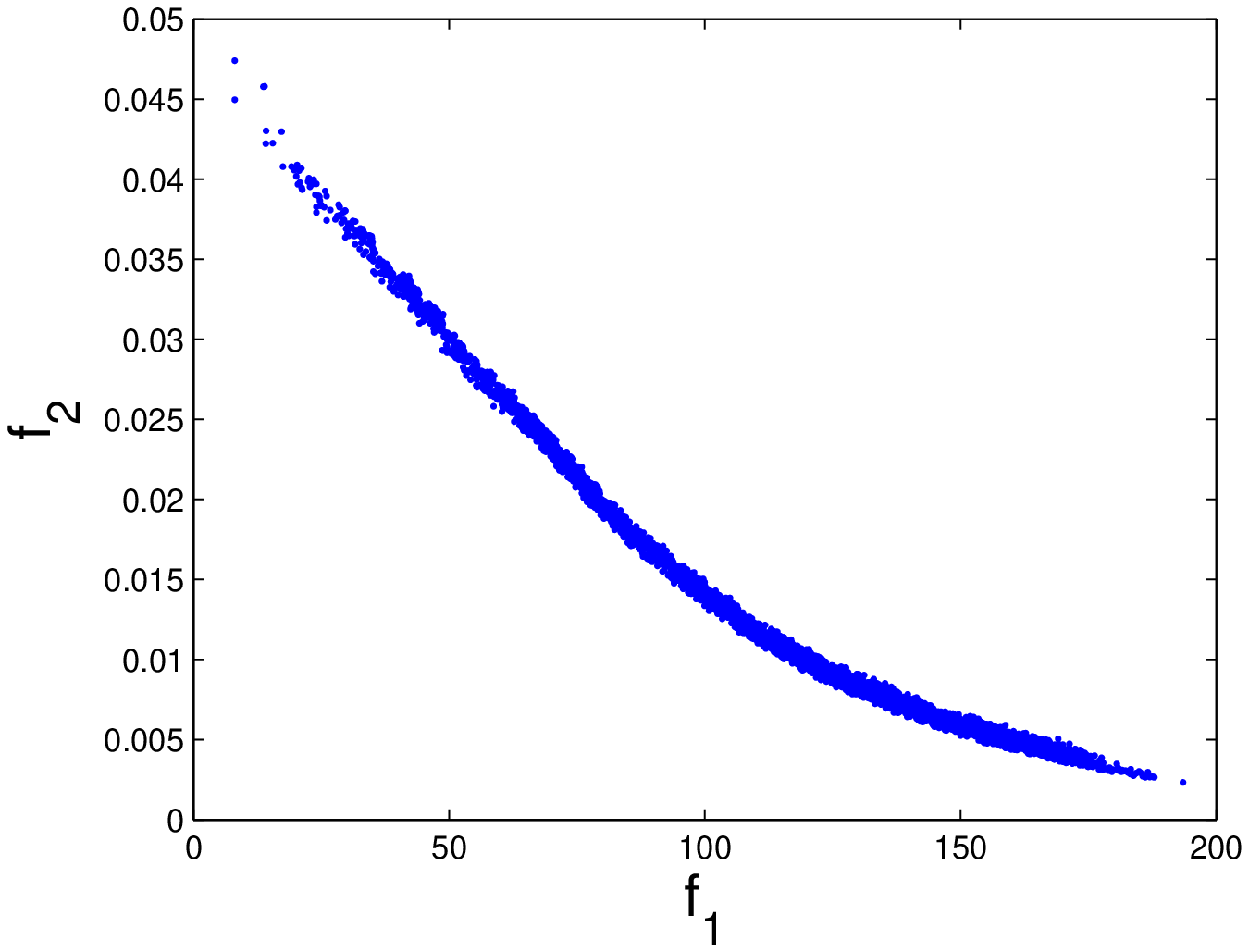,width=7cm}}
\caption{Numerical result for MOP (\ref{eq:MOP_SSW}): projections to the coordinates
$x_1, x_2, x_3$ of the final archive (left) and their images (right).}
\label{fig:SSW}
\end{center}
\end{figure}

\subsection{Example 4}
Next, we consider a real-life engineering problem, namely the design
of a four-bar plane truss (\cite{stadler:92}): 
\begin{equation}
\label{eq:MOP_truss}
\begin{split}
F &:\mathbbm{R}^4 \to \mathbbm{R}^2\\
f_1(x) &= L(2x_1+\sqrt{2}x_2 + \sqrt{2}x_3 + x_4)\\
f_2(x) &= \frac{FL}{E}\left( \frac{2}{x_1} + \frac{2\sqrt{2}}{x_2}
           - \frac{2\sqrt{2}}{x_3} + \frac{1}{x_4}\right)
\end{split}
\end{equation}
$f_1$ models the volume of the truss, and $f_2$ the displacement of the joint. The model 
constants are the length $L$ of each bar ($L=200$ cm), the elasticity constants $E$ and $\sigma$
of the materials involved ($E = 2\times 10^5$ kN/cm$^3$, $\sigma=10$ kN/cm$^2$), and
the force $F$ which causes the stress of the truss ($F=10$ kN). The parameters $x_i$ 
represent the cross sectional areas of the four bars of the truss. The physical restrictions 
are given by
\begin{equation}
 Q = [F/\sigma,3F/\sigma]\times[\sqrt{2}F/\sigma,3F/\sigma]^2\times[F/\sigma,3F/\sigma]
\end{equation}
For the allowed tolerances we follow the suggestion made in \cite{engau:07} and set 
$\epsilon_1 = 50$ cm$^3$ and $\epsilon_2 = 0.0005$ cm. Figure \ref{fig:truss} shows
a result for $N=500,000$ randomly chosen points within $Q$ and for 
$\Delta = (10,0.0001)$, i.e., $\Delta_i = \epsilon_i / 5$ (see Remark 4.3 (b)). The final 
archive consists of 78 elements, and the computational time was 5.5 seconds. In contrast, 
a run of the same algorithm with the same setting but with $\Delta=0$ took 4 minutes and 21 
seconds leading to an archive with 8377 elements.

\begin{figure}
\begin{center}
\epsfig{file=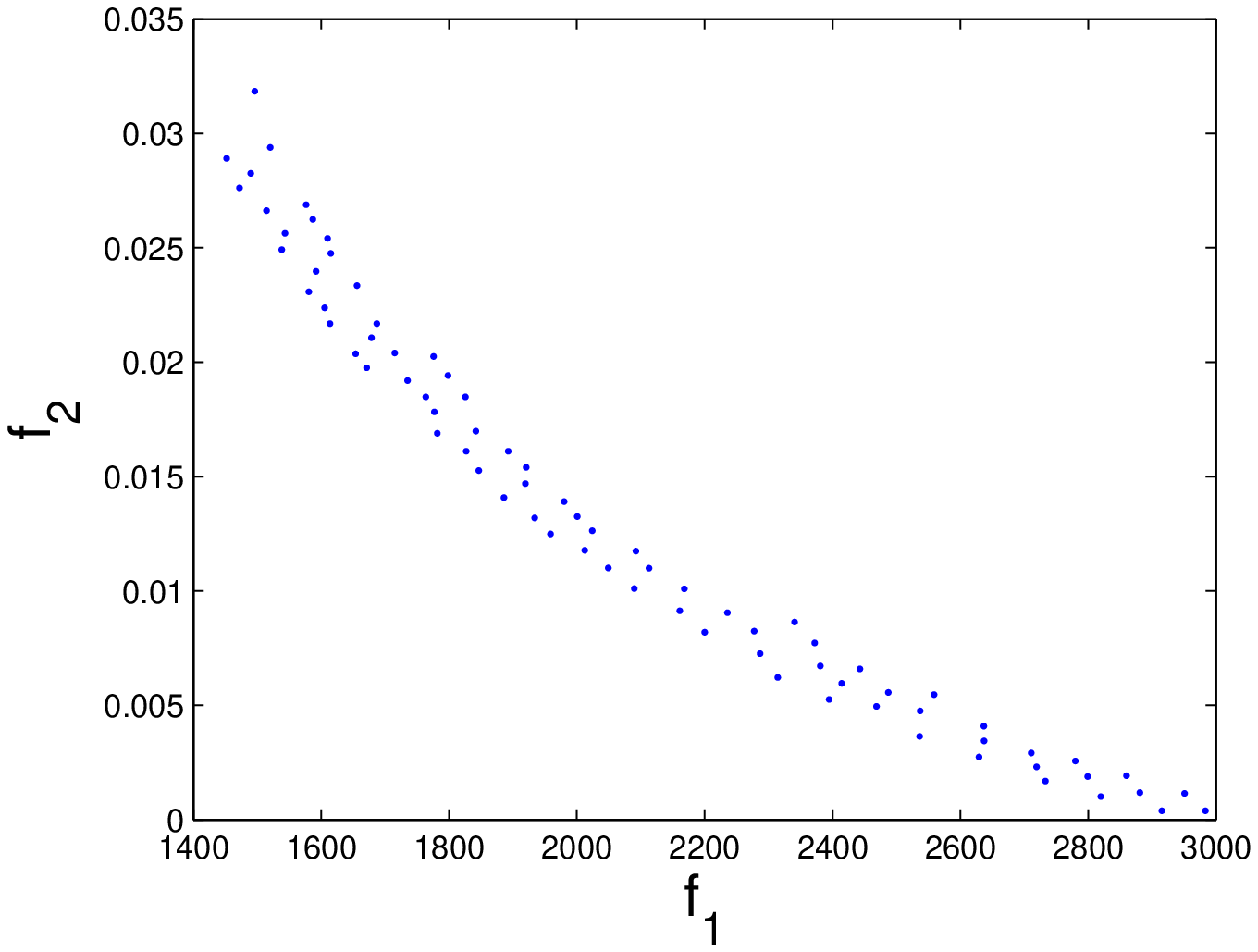,width=10cm}
\caption{Numerical result for MOP (\ref{eq:MOP_truss}). Here, we have chosen
$\epsilon=(50,0.0005)$ and $\Delta = (10,0.0001)$.}
\label{fig:truss}
\end{center}
\end{figure}

\subsection{Example 5}
Finally, we consider a bi-objective \{0,1\}-knapsack problem which should demonstrate
that the additional consideration of approximate solutions can be beneficial for
the decision maker. 
\begin{equation}
\label{eq:knapsack}
 f_1, f_2  : \{0,1\}^n\to\mathbbm{R},\  f_1(x) = \sum\limits_{j=1}^n c_j^1 x_j, \ f_2(x) = \sum\limits_{j=1}^n c_j^2 x_j
\end{equation}
\indent s.t.
\begin{equation*}
   \sum\limits_{j=1}^n w_j x_j \leq W, \ x_j \in \{0,1\},\; j=1,\ldots, n,
\end{equation*}
where $c_j^i$ represents the value of item $j$ on criterion $i, i=1,2$; $x_j=1$, 
$j=1,\ldots,n$, if item $j$ is included in the knapsack, else $x_j=0$. $w_j$ is the 
weight of item $j$, and $W$ the overall knapsack capacity. 
Figure \ref{fig:knp} shows one numerical result obtained by an evolutionary 
strategy\footnote{A modification of the algorithm presented in \cite{tantar:07} which
uses the novel archiver.} for an instance with $n=30$ items and with randomly
chosen values $c_j^i\in [8,12]$, and capacity $W=15$ (note that we are faced with a 
maximization problem). For $\epsilon=(2,2)$ and $\Delta=0.1$ a total of 182 elements 
forms the final archive, and only six of them are nondominated. When taking, for instance,
$x_0$ as reference (assuming, e.g., that this point has been selected by the
DM out of the nondominated points) and assuming a tolerance of 1 which represents a
possible loss of $0.6 \%$ compared to $x_0$ for each objective value, the resulting 
region of interest
includes seven approximate solutions (see Figure \ref{fig:knp}). These solutions, though 
similar in objective space, differ significantly in parameter space: two solutions 
differ compared to $x_0$ in 8 items, one in 10, and 4 solutions differ even in 12 items. 
Thus, in this case it is obvious that by tolerating approximate solutions -- where the loss 
of them can be determined a priori -- a larger variety of possibilities is offered to the DM.

\begin{figure}
\begin{center}
\epsfig{file=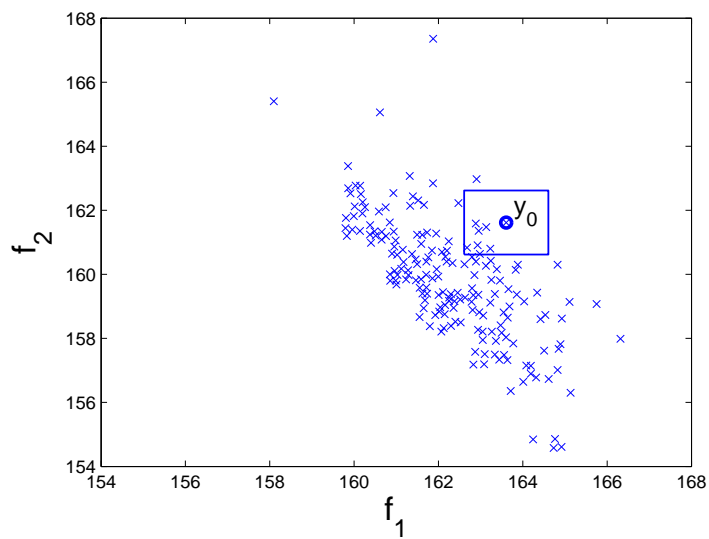,width=10cm}
\caption{Numerical result for MOP (\ref{eq:MOP_truss}) with $\epsilon = (2,2)$ and
$\Delta = 0.1$. The rectangle defines one possible region of interest around 
$y_0 = F(x_0)$ including seven approximate solutions (see text).}
\label{fig:knp}
\end{center}
\end{figure}

\begin{table}
\begin{center}
\caption{..}
\begin{tabular}{|c c||cccccccc|}\hline
$c^1$ & $c^2$ & $x_0$ & $x_1$ & $x_2$ & $x_3$ & $x_4$ & $x_5$ & $x_6$ & $x_7$ \\\hline\hline 
 & & 1 &   1 &   1  &  1 &   1  &  1 &   0 &   0\\
 & & 0 &   0 &   1  &  1 &   0  &  1 &   1 &   0\\
 & & 1 &   1 &   0  &  1 &   0  &  1 &   1 &   1\\
 & & 1 &   1 &   1  &  1 &   1  &  1 &   0 &   1\\
 & & 0 &   1 &   0  &  1 &   1  &  1 &   1 &   1\\
 & & 0 &   0 &   1  &  1 &   0  &  0 &   0 &   0\\
 & & 0 &   1 &   1  &  0 &   1  &  0 &   1 &   1\\
 & & 1 &   0 &   0  &  0 &   1  &  0 &   0 &   0\\
 & & 1 &   1 &   1  &  1 &   0  &  1 &   1 &   1\\
 & & 0 &   0 &   1  &  0 &   0  &  1 &   1 &   0\\
 & & 1 &   0 &   0  &  1 &   1  &  1 &   1 &   1\\
 & & 0 &   0 &   0  &  0 &   0  &  0 &   0 &   0\\
 & & 1 &   1 &   1  &  1 &   1  &  0 &   0 &   0\\
 & & 0 &   0 &   0  &  0 &   0  &  0 &   1 &   0\\
 & & 1 &   1 &   0  &  1 &   1  &  0 &   1 &   0\\
 & & 1 &   1 &   1  &  1 &   1  &  1 &   1 &   1\\
 & & 0 &   1 &   1  &  0 &   0  &  1 &   0 &   1\\
 & & 0 &   0 &   0  &  0 &   0  &  0 &   0 &   0\\
 & & 1 &   1 &   0  &  1 &   0  &  0 &   1 &   1\\
 & & 0 &   0 &   0  &  0 &   0  &  1 &   0 &   1\\
 & & 0 &   0 &   0  &  0 &   1  &  0 &   0 &   1\\
 & & 1 &   1 &   1  &  0 &   1  &  1 &   1 &   1\\
 & & 1 &   0 &   1  &  0 &   1  &  0 &   0 &   0\\
 & & 0 &   1 &   1  &  1 &   1  &  1 &   1 &   1\\
 & & 1 &   0 &   1  &  0 &   0  &  1 &   1 &   0\\
 & & 0 &   0 &   0  &  0 &   0  &  0 &   0 &   0\\
 & & 0 &   1 &   0  &  0 &   0  &  0 &   0 &   0\\
 & & 1 &   1 &   0  &  1 &   1  &  1 &   0 &   1\\
 & & 0 &   0 &   0  &  0 &   0  &  0 &   0 &   0\\
 & & 1 &   0 &   1  &  1 &   1  &  0 &   1 &   1 \\\hline
\end{tabular}
\end{center}
\end{table}

\section{Conclusion and Future Work}
We have proposed and investigated a novel archiving strategy for stochastic search
algorithms which allows -- under mild assumptions on the generation process -- for a
finite size approximation of the set $P_{Q,\epsilon}$ which contains all 
$\epsilon$-efficient solutions of an MOP within a compact domain $Q$. 
We have proven convergence of the algorithm toward a finite size representation of
the set of interest in the probabilistic sense, yielding bounds on the approximation 
quality and the cardinality of the archives. Finally, we have presented some numerical 
results indicating the usefulness of the approach. \\
The consideration of approximate solutions certainly leads to a larger variety of
possible options to the DM, but, in turn, also to a higher demand on the related 
decision making process. Thus, the support for this problem could be one focus of future 
research. Further, it could be interesting to integrate the archiving strategy directly 
into the stoachastic search process  (as e.g. done in \cite{deb:05} for an EMO algorithm) 
in order to obtain a fast and reliable search procedure.

\section*{Acknoledgements}
The second author gratefully acknowledges support from CO\-NA\-CyT 
project no. 45683-Y.
\bibliographystyle{plain}
\bibliography{conv_epseff2}

\clearpage
\section{Appendix 1}
Here we show the inequality in (\ref{eq:estimation_of_volume}), which is taken
from \cite{slcdt:07}. \\
Define
\begin{equation}
\begin{split}
 K&:=[m_1,M_1]\times \ldots \times[m_{k-1},M_{k-1}],\\
 K_{(i)}&:= [m_1,M_1]\times\ldots \times[m_{i-1},M_{i-1}]\times[m_{i+1},M_{i+1}]
         \times\ldots\times[m_{k-1},M_{k-1}],\\
 u_{(i)}&:= (u_1, \ldots, u_{i-1}, u_{i+1}, \ldots,u_{k-1}),\; i=1,\ldots,k-1.
\end{split}
\end{equation}

Then, the $(k-1)$-dimensional volume of $\Phi_f$ with parameter range $K$ can be 
bounded as follows:

\begin{eqnarray}
\begin{split}
Vol_{k-1}(\Phi_f) &= \int_K \sqrt{||\nabla f||^2 + 1}du =
  \int_K \sqrt{\left(\frac{\partial f}{\partial u_1}\right)^2 + \ldots
                 + \left(\frac{\partial f}{\partial u_{k-1}}\right)^2  + 1}du\\
 &\leq \int_K \left|\frac{\partial f}{\partial u_1} \right|du+ \ldots
  +  \int_K \left|\frac{\partial f}{\partial u_{k-1}} \right|du
  + \int_K 1 du\\
 &= \sum_{i=1}^{k-1}\left( \int_{K_{(i)}}
   \left(\int_{m_i}^{M_i}\left|\frac{\partial f}{\partial u_i}\right|du_i\right)
   du_{(i)}\right) + \int_K 1 du\\
 &= \sum_{i=1}^{k-1}\left(\int_{K_{(i)}}
   \left(- \int_{m_i}^{M_i} \frac{\partial f}{\partial u_i} du_i\right)
   du_{(i)}\right) + \int_K 1 du\\
 &\leq \sum\limits_{i=1}^k\prod\limits_{j=1\atop j\neq i}^k(M_j-m_j).
\end{split}
\end{eqnarray}

\end{document}